\newcommand{\A}{\mathbf{A}}
\newcommand{\C}{\mathbf{C}}
\newcommand{\cond}[1]{\mathfrak{c}_{#1}}
\newcommand{\cyclomod}[1][l]{\chi_{#1}}
\newcommand{\eps}{\varepsilon}
\newcommand{\f}{\mathrm{f}}
\newcommand{\Ff}[1][l]{\mathbf{F}_{#1}}
\newcommand{\Ffbar}[1][l]{\overline{\mathbf{F}}_{#1}}
\newcommand{\Frob}{\mathrm{Frob}}
\newcommand{\G}{\mathrm{G}}
\newcommand{\Gal}{\mathrm{Gal}}
\newcommand{\GQ}{\mathrm{Gal}(\Qbar/\Q)}
\newcommand{\GL}{\mathrm{GL}}
\newcommand{\h}{\mathfrak{H}}
\newcommand{\im}{\mathrm{Im}}
\newcommand{\ord}[1]{\mathrm{ord}_{#1}}
\newcommand{\place}{w}
\newcommand{\Q}{\mathbf{Q}}
\newcommand{\Qbar}{\overline{\mathbf{Q}}}
\newcommand{\R}{\mathbf{R}}
\newcommand{\SL}{\mathrm{SL}}
\newcommand{\trivial}{\mathbf{1}}
\newcommand{\Z}{\mathbf{Z}}
\newcommand{\Zbar}{\overline{\mathbf{Z}}}
\theoremstyle{plain}
\newtheorem{theorem}{Theorem}
\newtheorem{prop}[theorem]{Proposition}
\newtheorem{lemma}[theorem]{Lemma}
\newtheorem{cor}[theorem]{Corollary}
\newtheorem*{claim*}{Claim}
\theoremstyle{remark}
\newtheorem{question}{Question}
\newtheorem{rk}[question]{Remark}
\theoremstyle{definition}
\begin{document}

\title{Strong modularity of reducible Galois representations}
\author[Nicolas Billerey]{Nicolas Billerey $^\P$}
\thanks{\P\ Partially supported by CNRS and ANR-14-CE-25-0015 Gardio.}
\address{(1) Universit\'e Clermont Auvergne, Universit\'e Blaise Pascal,
	Laboratoire de Math\'ematiques,
	BP 10448,
	F-63000 Clermont-Ferrand, France.
	(2) CNRS, UMR 6620, LM, F-63171 Aubi\`ere, France}
\email{Nicolas.Billerey@math.univ-bpclermont.fr}

\author[Ricardo Menares]{Ricardo Menares $^\dag$ }
\address{Instituto de Matem\'aticas, Pontificia Universidad Cat\'olica de Valpara\'iso, Blanco Viel 596, Cerro Bar\'on, Valpara\'iso, Chile}
\thanks{\dag\ Pontificia Universidad Cat\'olica de Valpara\'iso. Partially supported by PUCV grant 037.469/2015}
\email{ricardo.menares@pucv.cl}
\date{\today}

\subjclass[2010]{Primary 11F80, 11F33. Secondary 11F70}

\begin{abstract}
Let $\rho\colon\GQ\rightarrow\GL_2(\Ffbar)$ be an odd, semi-simple Galois representation. Here, $l\geq5$ is prime and $\Ffbar$ is an algebraic closure of the finite field $\Z/l\Z$. When the representation is irreducible, the strongest form of Serre's  original modularity conjecture  (which is now proved) asserts that \(\rho\) arises from a cuspidal eigenform of type~$(N,k,\eps)$ over~$\Ffbar$, where $N$, $k$ and $\eps$ are, respectively, the level, weight and character attached to~$\rho$ by Serre.

In this paper we characterize, under the assumption $l>k+1$, reducible semi-simple representations, that we call strongly modular, such that the same result holds. This characterization generalizes a classical theorem of Ribet pertaining to the case $N=1$. When the representation is not strongly modular, we give a necessary and sufficient condition on primes $p$ not dividing~$Nl$ for which $\rho$ arises in level $Np$, hence genera\-lizing a classical theorem of Mazur concerning the case~$(N,k)=(1,2)$. 

The  proofs rely on the classical analytic  theory of Eisenstein series and on local properties of automorphic representations attached to newforms.

\end{abstract}

\maketitle


\section*{Introduction}

 \bigskip

Let~\(l\) be a prime number. We denote by $\Ffbar$ and $\Qbar$ algebraic closures of $\Ff=\Z/l\Z$ and the rational field $\Q$ respectively. In this article we are interested in Galois representations of the form

\begin{equation}\label{rep}
\rho\colon\Gal(\Qbar/\Q)\longrightarrow\GL_2(\Ffbar),
\end{equation}

\noindent where $\rho$ is a continuous homomorphism. Let $N\geq1$ and $k\geq2$ be two integers with $N$ coprime to~$l$ and let $\eps:(\Z/N\Z)^\times\rightarrow\Ffbar^\times$ be a character. Let~$f$ be a cusp form of type~$(N,k,\eps)$ over~$\Ffbar$ (in the sense of~\cite[D\'ef. p.~193]{Ser87}) which is an eigenfunction for the $p$-th Hecke operator with eigenvalue $a_p$ in~$\Ffbar$ for each prime number~$p$. By work of Deligne, to such a form~$f$, one can attach a (unique up to isomorphism) semi-simple odd Galois representation~$\rho_f$ which is unramified outside $Nl$ and satisfies the following property~:  If $\Frob_p$ denotes a Frobenius element at a prime~ $p\nmid Nl$, then the characteristic polynomial of $\rho_f(\Frob_p)$ is given by
$$
X^2-a_pX+\varepsilon(p)p^{k-1}.
$$
According to a standard terminology,  a Galois representation $\rho$ is called \emph{modular} if it is isomorphic to~$\rho_f$ for some~$f$ as above. In that case, we also say that $\rho$ arises from~$f$.

Moreover, to any given Galois representation~$\rho$, Serre attaches in~\cite[\S\S1-2]{Ser87}  a triple $(N,k,\varepsilon)$, which we refer to as the Serre type of~$\rho$, consisting of an integer $N\geq 1$ coprime to~$l$, an integer~$k\geq2$ and a group homomorphism $\varepsilon\colon(\Z/N\Z)^{\times}\rightarrow\Ffbar^{\times}$ which are called the conductor, weight and character of~$\rho$ respectively.

In this paper, we shall say that a Galois representation $\rho$ is \emph{strongly \mbox{modular}} if it arises from a cuspidal eigenform $f$ over $\Ffbar$ of type~$(N,k,\eps)$ where $(N,k,\varepsilon)$ is the Serre type of~$\rho$.

With this terminology, the strong form (\cite[(3.2.4$_?$)]{Ser87}) of Serre's modu\-larity conjecture, asserts that any odd, irreducible Galois representation~$\rho$ as in \eqref{rep},  with $l\geq5$, is strongly modular.  This conjecture has now been proved through the combined work of many mathematicians (see \cite{KhWi09a,KhWi09b} and the references therein). 

We remark that results of Carayol (\emph{cf.} \cite[Thm.~(A)]{Car86} and the considerations in  \cite[{\bf 1.-2.}]{Car89}),  ensure  that whenever $\rho$ is strongly modular, the eigenform $f$ can be taken to be the reduction of a  newform $F$ (in characteristic zero) of level $N$.

In this article, we address the case where $\rho$ is reducible. Let
\[
\nu_1,\nu_2\colon\GQ\longrightarrow\Ffbar^\times
\]
be continuous characters and assume that $\rho=\nu_1\oplus\nu_2$ defines an odd (semi-simple) Galois representation of Serre type~\((N,k,\eps)\). Then,  $\rho$ is modular (\emph{e.g.} see~\cite[Thm.~2.1]{BiMe15}) but need not be strongly modular. Our task is to provide a necessary and sufficient condition for such a reducible Galois representation to be strongly modular. Thanks to Ribet, such a characterization is known in the case $N=1$ under the assumption $l>k+1$ (see \cite[Lem.~5.2]{Rib75} or~\cite[Cor.~3.7]{BiMe15} for a reformulation  in this context). Under the same assumption, we prove in this paper a generalization of this result to arbitrary conductors. 

Let~$\eta \colon\GQ\rightarrow\Ffbar^\times$ be a character unramified at~$l$. For any integer~$k\geq2$ satisfying $l>k+1$, we define in paragraph~\ref{ss:Bernoulli_mod_l} a mod $l$  Bernoulli number~$B_{k,\eta }\in\Ffbar$ associated with~$\eta $ (our $B_{k,\eta}$ is essentially the reduction of a classical $k$-th Bernoulli number attached to a lift of $\eta$, but some care has to be taken due to denominators and the choice of place). For every prime number~$p$, set
\[
\eta(p)=\left\{
\begin{array}{ll}
\eta(\Frob_p) & \text{if $\eta$ is unramified at~$p$} \\
0 & \text{if $\eta$ is ramified at~$p$.}
\end{array}
\right.
\]
In this notation, the following is the main result of the paper.
\begin{theorem}\label{main}
Let $\nu_1,\nu_2\colon\GQ\rightarrow\Ffbar^\times$ be characters defining an odd (semi-simple) Galois representation $\rho=\nu_1\oplus\nu_2$ of Serre type~$(N,k,\eps)$ with $l>k+1$. Then, there exist characters \(\eps_1,\eps_2\colon\GQ\rightarrow\Ffbar^{\times}\) unramified at~\(l\) such that $\rho=\eps_1\oplus\eps_2\cyclomod[l]^{k-1}$, where $\cyclomod[l]$ is the mod~$l$ cyclotomic character. 
Set $\eta =\eps_1^{-1}\eps_2$. The representation~$\rho$ is strongly modular if and only if
\[
\text{either }B_{k,\eta }=0\quad\text{or}\quad\eta (p)p^{k}=1\text{ for some prime $p$ dividing~$N$}.
\]
\end{theorem}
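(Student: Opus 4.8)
The plan is to recast strong modularity as a congruence between the cuspidal and Eisenstein parts of $M_k(\Gamma_1(N))$ and to read off the obstruction from the constant terms of an Eisenstein series, supplemented by a local analysis at the primes dividing $N$. I would first fix the Eisenstein series in play: setting $N_i=\mathrm{cond}(\eps_i)$, multiplicativity of the Artin conductor on direct sums together with the fact that $\cyclomod^{k-1}$ is unramified outside $l$ gives $N=N_1N_2$, and the eigensystem of $\rho$ agrees away from $Nl$ with that of the Eisenstein newform $E:=E_k(\eps_1,\eps_2)$ of level $N$, weight $k$ and nebentypus $\eps_1\eps_2$, normalised so that the coefficient of $q$ is $1$ and with $a_q(E)=\eps_1(q)+\eps_2(q)q^{k-1}$. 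By Carayol's theorems quoted in the introduction, $\rho$ is strongly modular if and only if there is a newform $F$ of level exactly $N$, weight $k$ and nebentypus a lift of $\eps$, with $\rho_F\cong\rho$; any such $F$ is then Eisenstein modulo some place $\mathfrak{l}\mid l$, i.e. $a_q(F)\equiv a_q(E)\pmod{\mathfrak{l}}$ for all $q\nmid Nl$.

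The crux is then the behaviour at the primes $q\mid N$, obtained by comparing the local automorphic representation $\pi_{F,q}$ with $\rho|_{D_q}=\eps_{1,q}\oplus\eps_{2,q}\,\mathrm{unr}(q^{k-1})$ through the local Langlands correspondence and the fact that the conductor of $\rho$ equals $N$, so that no ramification drops. This forces $a_q(F)\equiv a_q(E)$ at every $q\mid N$, except that at a prime $p\mid N$ where both $\eps_1$ and $\eps_2$ are ramified the form $F$ may instead be a ramified twist of the Steinberg representation at $p$; computing the residual Galois representation of such a local component shows that this can occur, for some $p\mid N$, precisely when $\eta(p)p^{k}=1$. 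Hence $\rho$ is strongly modular if and only if either (a) some cuspidal newform $F$ of level $N$ satisfies $F\equiv E\pmod{\mathfrak{l}}$ (the ``principal series everywhere'' case), or (b) $\eta(p)p^{k}=1$ for some $p\mid N$ and some cuspidal newform of level $N$ which is special at $p$ has residual representation $\rho$. For (a) I would invoke the theory of mod $l$ modular forms (available since $l\geq5$, $l\nmid N$ and, crucially, $l>k+1$) and the $q$-expansion principle: such an $F$ exists exactly when the normalised $E$ is a cusp form modulo $\mathfrak{l}$, i.e. all its constant terms at the cusps vanish mod $\mathfrak{l}$. The classical analytic formulas for these constant terms --- handled with the care over denominators and the choice of place already signalled by the definition of $B_{k,\eta}$ --- show that the only constant term of $E$ that is not forced to vanish is, up to a $\mathfrak{l}$-adic unit built from Gauss sums, the mod $l$ Bernoulli number $B_{k,\eta}$; so (a) contributes exactly the condition $B_{k,\eta}=0$. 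For (b), after twisting $\rho$ by a Dirichlet character ramified only at $p$ so as to remove its ramification there, $\eta(p)p^{k}=1$ is precisely the condition on Frobenius eigenvalues at $p$ that allows one to run a level-raising argument in the spirit of Ribet producing a cuspidal form special at $p$; twisting back and again using Carayol gives strong modularity. Combining (a) and (b) yields the equivalence.

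The hard part will be the bad-prime analysis together with the attendant bookkeeping: one must carry out the representation-theoretic case distinction at the primes dividing $N$ precisely enough to see that the \emph{only} extra route to strong modularity beyond $B_{k,\eta}=0$ is the Steinberg degeneration, governed by exactly the condition $\eta(p)p^{k}=1$, and one must control the exact conductor under every twist by appealing throughout to Carayol's optimal-level results. The analytic input for case (a) is essentially classical, but --- as the introduction warns --- the normalisations, the Gauss-sum units, and the dependence on the chosen place $\mathfrak{l}$ all require genuine care.
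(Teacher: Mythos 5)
Your overall frame (the Eisenstein series $E=E_k^{\chi_1,\chi_2}$, a congruence with a level-$N$ newform, constant terms, local Langlands at the bad primes) matches the paper's, but the way you split the two alternatives of the theorem contains a genuine error. You assert that the only constant term of $E$ not forced to vanish is, up to a unit, $B_{k,\eta}$, and you then try to recover the second alternative $\eta(p)p^{k}=1$ from a Steinberg degeneration at $p\mid N$ plus a Ribet-style level-raising argument. Both halves of this are wrong. First, the constant terms of $E$ at the cusps of $X_1(N)$ other than $\infty$ are, up to $\place$-adic units, $\frac{B_{k,(\overline{\chi_1}\chi_2)_0}}{2k}\prod_{p\mid N}\left(1-(\chi_1\overline{\chi_2})_0(p)p^{-k}\right)$ (Proposition~\ref{prop:cst_terms}); the Euler factors are exactly where the condition $\eta(p)p^{k}=1$ comes from, and the single statement ``all constant terms vanish mod $\place$'' is already equivalent to the full disjunction in the theorem. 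Second, the level-raising route is not available: $\rho$ has conductor $N$, so there is no eigenform of level $N/p$ from which to raise the level, and the existence of a level-$N$ newform special at $p$ with residual representation $\rho$ is asserted rather than proved (the local computation you sketch would in any case also allow $\eta(p)p^{k-2}=1$; compare the analysis at the auxiliary prime in the proof of Theorem~\ref{main2}). In the paper's argument no Steinberg component at $p\mid N$ ever intervenes: Proposition~\ref{prop:congruences} shows that for every $p\mid N$ either the local component is a ramified principal series (when $\ord{p}(N)$ equals the $p$-valuation of the conductor of the nebentypus) or $A_p=0$ while both $\eps_1,\eps_2$ are ramified at $p$, so that $\nu_\place(A_p)=\eps_1(p)+\eps_2(p)p^{k-1}$ holds at \emph{all} primes $p\neq l$; this is what allows the $\Theta$-operator argument and the conclusion that all constant terms of $E$ vanish mod $\place$.

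Two further gaps. Your converse direction is incomplete: once the constant terms vanish mod $\place$, the paper simply takes the reduction of $E$ itself as the required mod-$l$ cuspidal eigenform (via the $q$-expansion principle and the injectivity of $\Theta$ for $l>k+1$), whereas you leave the production of a cusp form, in the case $B_{k,\eta}\neq0$ but $\eta(p)p^{k}=1$, to the unjustified level-raising step. And you do not treat the case $(N,k)=(1,2)$, where the Eisenstein series $E_2^{\trivial,\trivial}$ does not exist and the equivalence must be checked directly (the paper does this using $B_2=1/6\not\equiv0$ and the absence of nonzero cusp forms of weight $2$ and level $1$ over $\Ffbar$).
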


If the representation $\rho$ alluded to above is not strongly modular, we give in Theorem~\ref{main2} below a precise characterization (under the same assumption as before) of the primes $M\nmid Nl$ for which $\rho$ arises from a cusp form of type~$(NM,k,\eps)$. Such a theorem extends a result of Mazur (\cite[Prop.~5.12]{Maz77}), that handles  the case $(N,k)=(1,2)$, to arbitrary weights and conductors.
\begin{theorem}\label{main2}
In the same notation and under the same assumptions as in Theorem~\ref{main}, assume moreover that $\rho$ is not strongly modular. Let $M$ be a prime number not dividing~$Nl$. Then~$\rho$ arises from a modular form of type~$(NM,k,\varepsilon)$ if and only if 
\[
\left\{
\begin{array}{ll}
M\equiv1\pmod{l} & \text{if }(N,k)=(1,2) \quad \text{(Mazur)}\\
\eta(M)M^k=1 & \text{if }(N,k)\not=(1,2).
\end{array}
\right.
\]
In particular, there are infinitely many such primes.
\end{theorem}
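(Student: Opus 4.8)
The plan is to reduce everything to the computation of Eisenstein series and the local theory of automorphic representations, exactly as in the proof of Theorem~\ref{main}, but now working at level~$NM$. First I would observe that, since $\rho$ is reducible of Serre type~$(N,k,\eps)$ and $\rho=\eps_1\oplus\eps_2\cyclomod[l]^{k-1}$ with $\eps_1,\eps_2$ unramified at~$l$, any cusp form of type~$(NM,k,\eps)$ from which $\rho$ arises must have the same reduction mod~$l$ as a suitable mod~$l$ Eisenstein series. Concretely, the characters $\eps_1,\eps_2$ give rise to an Eisenstein series of weight~$k$ whose associated Galois representation is $\rho$; its level is governed by the conductors $\cond{\eps_1}$, $\cond{\eps_2}$, and congruences between this Eisenstein series and cusp forms are what make $\rho$ modular in a given level. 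The key point is that raising the level from $N$ to $NM$ introduces exactly one new oldform/newform direction at~$M$, and I must detect when a cuspidal eigenform congruent to the Eisenstein series exists there.

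The main steps, in order, would be: (1) Recall from the proof of Theorem~\ref{main} the precise Eisenstein series $E$ of weight $k$ and level dividing $N$ (for the relevant nebentypus) attached to the pair $(\eps_1,\eps_2)$, together with the constant term computation that produces the Bernoulli number $B_{k,\eta}$; the non-strong-modularity hypothesis says precisely that $B_{k,\eta}\neq 0$ and $\eta(p)p^k\neq 1$ for all $p\mid N$. (2) At level $NM$, enumerate the mod~$l$ Eisenstein series with the same system of Hecke eigenvalues away from $NMl$ as $\rho$; these are the $M$-stabilizations of $E$, whose eigenvalues at~$M$ are the roots of $X^2-a_M(E)X+\eps(M)M^{k-1}$, i.e.\ (up to the cyclotomic twist) $\eps_1(M)$ and $\eps_2(M)M^{k-1}$, together possibly with a genuinely new Eisenstein series at level divisible by $M$ when $\eta(M)M^k=1$. (3) Invoke the classical result (Deligne--Serre lifting lemma together with the analytic theory of Eisenstein series, exactly the tools cited in the abstract) that $\rho$ arises from a cusp form of type $(NM,k,\eps)$ if and only if the relevant Eisenstein component at level $NM$ is congruent mod~$l$ to a cusp form — equivalently, if and only if the constant term of the appropriate stabilized Eisenstein series vanishes mod~$l$ at some cusp, or the local representation at $M$ forces a congruence. (4) Carry out the constant-term computation at the cusps that become available at level $NM$: the new cusps at~$M$ contribute a factor built from $1-\eta(M)M^k$ (up to units and the already-nonzero $B_{k,\eta}$), so the congruence exists precisely when $\eta(M)M^k=1$. (5) Treat the exceptional case $(N,k)=(1,2)$ separately: here $\eta=\trivial$, so $\eta(M)M^k=M^2$, and $M^2\equiv 1\pmod l$ would give two solutions, but the weight-2 level-$M$ Eisenstein series (the one with constant term involving $M-1$) forces the sharper condition $M\equiv 1\pmod l$ — this is exactly Mazur's computation with the Eisenstein quotient of $J_0(M)$, and I would reproduce it via the constant term $\frac{M-1}{24}$ of the weight-2 Eisenstein series on $\Gamma_0(M)$. (6) For infinitude: when $(N,k)\neq(1,2)$, the condition $\eta(M)M^k=1$ holds for all primes $M$ in a positive-density set of congruence classes modulo the conductor of $\eta$ times $l$ (nonempty because $\eta$ has order prime to~$l$ after the twist and $k$ is fixed), so Dirichlet gives infinitely many; when $(N,k)=(1,2)$, $M\equiv 1\pmod l$ again gives infinitely many by Dirichlet.

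The hard part will be step~(4): pinning down the exact local conditions at~$M$ and the precise normalization of the constant terms of the $M$-stabilized Eisenstein series at \emph{all} cusps of $X_0(NM)$ (or rather $X_1$ with the nebentypus), so that the vanishing mod~$l$ really is equivalent to $\eta(M)M^k=1$ and not to some coarser or finer divisibility. In particular one must check that the factor $B_{k,\eta}$ (already nonzero by hypothesis) does not reappear in a way that could accidentally be divisible by~$l$ at the new cusps, and that no ramification subtlety of $\eps_1,\eps_2$ at~$M$ (which cannot occur, since $M\nmid N$, but must be verified) interferes. I also expect some care to be needed in matching the automorphic-representation-theoretic description of when an Eisenstein series at level $NM$ is congruent to a cuspform (Step~3) with the classical $q$-expansion computation, since the two must agree and the local component at~$M$ of the automorphic representation dictates whether $M$ is in the level of the putative cuspidal newform or whether it occurs as an oldform — this dichotomy is precisely what separates the case $(N,k)=(1,2)$ from the general case.
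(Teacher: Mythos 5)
Your overall architecture (stabilize the Eisenstein series $E_k^{\chi_1,\chi_2}$ at $M$, test cuspidality of the reduction via constant terms at the cusps of level $NM$, and use the local component at $M$ of the automorphic representation) is the paper's strategy, and your converse direction is essentially correct: the paper takes $F_2=E_k^{\chi_1,\chi_2}-\chi_1(M)\alpha_ME_k^{\chi_1,\chi_2}$ and checks via Corollary~\ref{cor:cst_terms} that \emph{all} constant terms (not ``some cusp'', as you write in step~(3) --- cuspidality of the reduction requires vanishing at every cusp) are killed by $\place$ exactly under $\eta(M)M^k=1$. Your infinitude argument and the separate treatment of $(N,k)=(1,2)$ (which the paper simply attributes to Mazur) are also fine.

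The genuine gap is in the forward direction, precisely at the point you flag as ``the hard part'' but do not resolve. Your step~(4) asserts that the new cusps contribute ``a factor built from $1-\eta(M)M^k$'', so that the congruence forces $\eta(M)M^k=1$ directly. This is false for one of the two stabilizations: the $U_M$-eigenvector $F_1=E_k^{\chi_1,\chi_2}-\chi_2(M)M^{k-1}\alpha_ME_k^{\chi_1,\chi_2}$ has constant terms proportional to $1-M^{-1}$ or to $1-\eta(M)M^{k-1}$ depending on the cusp, not to $1-\eta(M)M^k$. Since a priori you do not know the $U_M$-eigenvalue of the given mod~$l$ cusp form, you cannot decide which stabilization it is congruent to by $q$-expansions alone. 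The paper's resolution is a three-step local argument at $M$: (i) lift to a characteristic-zero eigenform and use Carayol's results together with the non-strong-modularity of $\rho$ to show the associated newform has level exactly $NM$; (ii) since $A_M\neq0$, the local component at $M$ is an unramified special (Steinberg) representation, and local--global compatibility mod $\place$ yields the dichotomy of Proposition~\ref{prop:at_M}: either $\eta(M)M^k=1$, or $\eta(M)M^{k-2}=1$ with $\nu_\place(A_M)=\eps_1(M)$; (iii) in the second case the form is congruent to $F_1$, and the constant term of $F_1$ at the cusp $1/(M\cond{2})$ equals $\Upsilon_k^{\chi_1,\chi_2}(\gamma,1)\left(1-\overline{\chi_1}(M)\chi_2(M)M^{k-1}\right)$, where $\Upsilon_k^{\chi_1,\chi_2}(\gamma,1)$ is a $\place$-unit \emph{because} $\rho$ is not strongly modular; this forces $\eta(M)M^{k-1}=1$, hence $M\equiv1\pmod l$, hence $\eta(M)M^k=1$. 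Without steps (ii) and (iii) --- i.e.\ without the Steinberg identification and the elimination of the case $\eta(M)M^{k-2}=1$ --- your proof of the ``only if'' implication does not close.
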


We remark that, due to the results of Carayol already mentioned, the modular form over $\Ffbar$ in Theorem \ref{main2} can be taken to be the reduction of a newform  of level $NM$ (\emph{cf}. subsection \ref{conducteur} of this article). 

Although the details need to be treated separately, the overall strategy for proving both results is the same and relies on properties of characteristic zero eigenforms and their attached automorphic representations. Let us briefly describe this strategy in the case of Theorem~\ref{main}. Let~$\rho$ be as the statement of the theorem. Attached to such a reducible representation is a specific Eisenstein series~$E$. If $\rho$ is strongly modular, then there must occur a congruence between~$E$ and a certain cuspidal (new) eigenform of weight~$k$ and level~$N$. This in turn implies that the constant terms of~$E$ vanish at all cusps after reduction modulo~$l$, leading to the necessary conditions of the theorem. Conversely, if these conditions hold, then we prove that the reduction of~$E$ modulo~$l$ is a cusp form~$f$ over~$\Ffbar$ of the same type as~$\rho$ such that~$\rho\simeq\rho_f$.

The paper is organized as follows. In Section~\ref{s:Bernoulli}, we define the Bernoulli numbers attached to mod~$l$ Galois characters that appear in the statement of Theorem~\ref{main} above and compute the constant term at the various cusps of a particular Eisenstein series which is of crucial use in the proofs of our results. After quickly recalling in Section~\ref{s:background} some background on cuspidal eigenforms and Hecke operators in the adelic setting, we prove in Section~\ref{s:proofs} our two main theorems. 

\medskip

\noindent {\bf Acknowledgements:} The authors wish to thank Vinayak Vatsal for ins\-piring discussions and the Pacific Institute for the Mathematical Sciences in Vancouver for providing ideal conditions to carry out part of this project. We also thank the anonymous referee for precise comments that have improved the exposition.

\section{Bernoulli numbers and Eisenstein series}\label{s:Bernoulli}

In this section we recall some classical definitions and integrality results on Bernoulli numbers attached to Dirichlet characters. Also, we compute the constant term in the $q$-expansion at the  cusps of the modular curve $X_1(N)$ of some specific Eisenstein series that will be used in the sequel. The final computation is stated in Proposition \ref{prop:cst_terms} below. 

\subsection{Notation and definitions}\label{ss:notation}
Let~$\phi$ be a primitive Dirichlet character of conductor~$\mathfrak{f}\ge 1$. The Gauss sum attached to~$\phi$ is defined by
\begin{equation*}
W(\phi)=\sum_{n=1}^{\mathfrak{f}}\phi(n)e^{2i\pi n/\mathfrak{f}}.
\end{equation*}
It is a non-zero algebraic integer whose norm is a power of~$\mathfrak{f}$. The (gene\-ralized) Bernoulli numbers $(B_{m,\phi})_{m\ge1}$ associated with~$\phi$ are defined by the following expansion
\begin{equation}\label{eq:def_gene_Bernoulli}
\sum_{n=1}^{\mathfrak{f}}\phi(n)\frac{te^{nt}}{e^{\mathfrak{f}t}-1}=\sum_{m\ge0}B_{m,\phi}\frac{t^m}{m!}.
\end{equation}
Note that when $\phi=\trivial$ is the trivial character (of conductor~$1$), then, for every integer~$m\ge2$, we have $B_{m,\phi}=B_m$ where $B_m$ denotes the classical $m$-th Bernoulli number.

\subsection{Bernoulli numbers of mod~$l$ characters}\label{ss:Bernoulli_mod_l} Let $\eta \colon\GQ\rightarrow\Ffbar^{\times}$ be a Galois character unramified at~$l$. Denote by $\cond{0}$ the conductor of~$\eta$ (coprime to~$l$ by assumption) and identify~$\eta$ with a character
\[
\eta\colon(\Z/\cond{0}\Z)^{\times}\longrightarrow\Ffbar^{\times}.
\]
The aim of this paragraph is to define the $k$-th Bernoulli number attached to~$\eta $ for any integer $k\geq2$ such that~\(l>k+1\).
This definition relies on integrality properties of Bernoulli numbers attached to Dirichlet characters which we now recall.

Let~\(\place\) be a place of~\(\Qbar\) above~\(l\) and let~$\Zbar_{\place}$ be the local ring of $w$-integral algebraic numbers in~$\Qbar$. The residue field~$k_{\place}$ of~$\place$ identifies with an algebraic closure of~$\Ff$. Fix an isomorphism $\iota\colon k_{\place}\rightarrow\Ffbar$ and consider the composition map
\[
\nu_{\place}\colon\Zbar_{\place}\rightarrow k_{\place}\stackrel{\iota}{\rightarrow}\Ffbar.
\]
We may then consider the multiplicative lift
\[
\psi\colon(\Z/\cond{0}\Z)^{\times}\longrightarrow\Zbar^{\times}
\]
of~\(\eta \) with respect to~\(\place\). That is, $\psi$ is the unique character with values in the roots of unity of prime-to-$l$ order such that
\[
\nu_\place(\psi(x))=\eta (x),\quad\text{for all }x\in(\Z/\cond{0}\Z)^{\times}.
\]

We now state the integrality result we need to define our Bernoulli numbers associated to $\eta$.
\begin{lemma}\label{lem:Carlitz}
For any integer~$k\geq2$ such that $l>k+1$, the Bernoulli number~$B_{k,\psi}$ is $w$-integral.
\end{lemma}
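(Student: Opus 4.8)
The plan is to reduce the statement to the classical von Staudt–Clausen theorem together with the Kummer-type congruences for generalized Bernoulli numbers, treating separately the case $\cond{0}=1$ (where $\psi$ is trivial and $B_{k,\psi}=B_k$ is a classical Bernoulli number) and the case $\cond{0}>1$.

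First I would dispose of the case $\cond{0}=1$: here $\psi=\trivial$ and $B_{k,\psi}=B_k$. By the von Staudt–Clausen theorem, the denominator of $B_k$ is squarefree and a prime $q$ divides it if and only if $(q-1)\mid k$. Since we are assuming $l>k+1$, we have $l-1>k$, so $(l-1)\nmid k$ (as $k\geq 2$ forces $0<k<l-1$); hence $l$ does not divide the denominator of $B_k$, i.e.\ $B_k$ is $\place$-integral. (One must also note $B_k=0$ for odd $k\geq 3$, which is trivially $\place$-integral, so the only content is in even $k$.)

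For the case $\cond{0}>1$, the character $\psi$ is nontrivial, and the relevant integrality statement is a classical one going back to Carlitz (hence the lemma's name): for a nontrivial primitive Dirichlet character $\phi$ of conductor $\mathfrak{f}$, the generalized Bernoulli number $B_{m,\phi}$ is an algebraic integer away from the primes dividing $\mathfrak{f}$ and—crucially—the primes $q$ with $(q-1)\mid m$. The cleanest route is via the analytic expression relating $B_{m,\phi}$ to special values $L(1-m,\phi)$ and the known $q$-integrality of these values, or alternatively via the congruence
\[
B_{m,\phi}\equiv \text{(a $\place$-unit multiple of)} \ (1-\phi(q)q^{m-1})\frac{B_{m,\phi}}{m} \pmod{\text{denominators}},
\]
but the most self-contained argument invokes the standard fact (e.g.\ Washington, \emph{Introduction to Cyclotomic Fields}, Ch.~5): since $\psi$ has prime-to-$l$ order, $\psi$ is unramified at $\place$, and $B_{k,\psi}$ is $\place$-integral provided $l\nmid \mathfrak{f}$ and $(l-1)\nmid k$ (when the trivial-character component would otherwise contribute a pole). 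Both conditions hold here: $\mathfrak{f}\mid\cond{0}$ is prime to $l$ by hypothesis, and $l>k+1$ gives $(l-1)\nmid k$ exactly as above.

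The main obstacle—really the only subtle point—is pinning down the precise integrality statement and its source: one needs that the \emph{only} rational primes that can appear in the denominator of $B_{m,\phi}$ for nontrivial $\phi$ are those dividing $\mathfrak{f}$ or satisfying $(q-1)\mid m$, and that for $q\nmid\mathfrak{f}$ this follows from von Staudt–Clausen applied after twisting. I would therefore structure the proof as: (i) recall/cite the general integrality result for $B_{m,\phi}$; (ii) observe that $\psi$ has conductor dividing $\cond{0}$, which is coprime to $l$; (iii) use $l>k+1$ to rule out $(l-1)\mid k$; (iv) conclude $B_{k,\psi}\in\Zbar_{\place}$. The one place where care is genuinely needed, and which the surrounding text already flags ("some care has to be taken due to denominators and the choice of place"), is that "$w$-integral" refers to the specific place $\place$ fixed above, so all the divisibility statements must be read $l$-adically at $\place$; but since $B_{k,\psi}$ lies in a fixed number field (the field generated by the values of $\psi$) and the denominator is a rational integer, $\place$-integrality is equivalent to $l$ not dividing that rational denominator, which is precisely what (i)–(iii) establish.
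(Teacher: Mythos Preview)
Your plan is correct and would work, but it differs from the paper's argument in the nontrivial-character case. Both you and the paper dispose of $\cond{0}=1$ via von Staudt--Clausen in the same way. For $\cond{0}>1$, however, the paper invokes Carlitz's theorem in its precise form: there is an explicit element $d$ (depending on the shape of $\cond{0}$) such that $dk^{-1}B_{k,\psi}$ is an algebraic integer, and the paper then verifies case by case that $\place\nmid d$. The only delicate case is $\cond{0}=p^n$ with $p$ odd and $n\geq 2$, where $d=1-\psi(1+p)$; here the paper argues that $\place\mid d$ would force $\eta$ to factor through $(\Z/p\Z)^\times$, contradicting primitivity. Your approach instead appeals to a blanket integrality statement for $B_{k,\psi}$ away from primes dividing the conductor (plus the $(l-1)\mid k$ caveat). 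This is a legitimate shortcut, and in fact the $(l-1)\nmid k$ condition you impose is unnecessary once $\psi$ is nontrivial --- Carlitz's $d$ is supported only at primes of $\cond{0}$ in every case, so $l\nmid\cond{0}$ alone suffices. What the paper's route buys is self-containment and an explicit reference; what yours buys is brevity, at the cost of pointing to a result you don't precisely locate. One small inaccuracy: your closing remark that ``the denominator is a rational integer'' is not quite right, since $B_{k,\psi}\in\Q(\psi)$ may have a non-rational denominator ideal; but this does not affect the argument, as $\place$-integrality is all that is at stake.
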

\begin{proof} Let~$k$ be an integer as in the statement of the lemma. We easily check on the definition~(\ref{eq:def_gene_Bernoulli}) that if $\psi(-1)\not=(-1)^k$, then $B_{k,\psi}=0$. Assume therefore that~$\psi(-1)=(-1)^k$.  If $\psi$ is the trivial character, then $k$ must be an even integer and the corresponding Bernoulli number $B_{k,\psi}$ is nothing but the classical Bernoulli number~$B_k$. The Van Staudt-Clausen theorem ensures that the prime divisors $p$ of the denominator of $B_k$ satisfy $p-1\mid k$. Since $l>k+1$, the prime number $l$ does not divide the denominator of $B_k$, as desired.

Assume therefore $\psi$ is non trivial. Let $$d=\left\{ 
\begin{array}{ll}
 1 & \textrm{ if } \cond{0} \textrm{ admits two different prime divisors} \\
 2 & \textrm{ if } \cond{0}=4 \\
 1 & \textrm{ if } \cond{0}=2^n, n>2\\
 k\cond{0}  & \textrm{ if } \cond{0}>2 \textrm{ is a prime number}\\
 1-\psi(1+p) & \textrm{ if } \cond{0}=p^n, p>2, n>1, p \textrm{ is a prime number. } 
\end{array}\right.$$

By a theorem of Carlitz (see \cite{Car59a} and~\cite{Car59b}), $dk^{-1}B_{k,\psi}$ is an algebraic integer. Hence, we are reduced to verify that~$w$ does not divide~$d$. 

Assume that $\cond{0}=p^n$, where $p$ is an odd prime number and $n \geq 2$. We assume by contradiction that $w$ divides $d=1-\psi(1+p)$. Let $H\subseteq (\Z/p^n\Z)^\times$ be the subgroup  spanned by~$1+p$.  Taking the reduction map $\nu_\place$ attached to~$\place$, we conclude that  $\eta $ is trivial on $H$.  Since  $H$ is the kernel of the natural map $(\Z/p^n\Z)^\times \rightarrow (\Z/p\Z)^\times$, we conclude that $\eta $ can be factored through $(\Z/p\Z)^\times$, contradicting the primitivity of $\eta$. 

If $\cond{0}\geq 3$  is not of the form discussed in the previous paragraph, the fact that $l \nmid d$ clearly follows from the definition of $d$ and the hypothesis on $k,l$ and $\cond{0}$.
\end{proof}

Using this result, we now set, for any integer~$k$ as above,
\begin{equation}\label{eq:def_Bernoulli}
B_{k,\eta }=\nu_{\place}\left(B_{k,\psi}\right)\in\Ffbar.
\end{equation}
Let $\place'$ be another place of~$\Qbar$ over~$l$. There exists $\sigma\in\GQ$ such that $\place'=\sigma(\place)$ and we identify the residue field $k_{\place'}$ (of the ring of $\place'$-integral algebraic numbers in~$\Qbar$) with~$\Ffbar$ via~$\iota\circ\sigma^{-1}$. Then~$\sigma(\psi)$ is the multiplicative lift of~$\eta$ with respect to the place~$\place'=\sigma(\place)$ and since we have
\[
B_{k,\sigma(\psi)}=\sigma\left(B_{k,\psi}\right)
\]
the definition~(\ref{eq:def_Bernoulli}) is independent of the choice of the place~$\place$. We refer to $B_{k,\eta }\in\Ffbar$ as the $k$-th Bernoulli number associated with~$\eta $.

\subsection{The setting}\label{ss:setting}
In this paragraph we set some notation and definitions that will be used in the rest of this section. Let $k\ge2$ be an integer. We set
$$
C_k=\frac{(-2i\pi)^k}{(k-1)!}.
$$
Let
\[
\chi_i\colon(\Z/\cond{i}\Z)^{\times}\longrightarrow\C^{\times},\quad i=1,2
\]
be primitive Dirichlet characters such that $\chi_1(-1)\chi_2(-1)=(-1)^k$. Denote by $\overline{\chi_i}$ the complex conjugate of~$\chi_i$, $i=1,2$. Put $N=\cond{1}\cond{2}$. For $k\geq 3$ and $z$ in the complex upper-half plane~$\h$, let  
$$
G_{k}^{\chi_1,\chi_2}(z)=\sum_{(m,n)\in \Z^2\backslash \{(0,0)\} }^{} \frac{\chi_1(m)\overline{\chi_2}(n)}{(mz+n)^k}.
$$
On the other hand, for any $\eps>0$, we consider
$$
G_{2, \eps}^{\chi_1,\chi_2}(z)=\sum_{(m,n)\in \Z^2\backslash \{(0,0)\} }^{} \frac{\chi_1(m)\overline{\chi_2}(n)}{(mz+n)^2|mz+n|^{2\eps}},\quad (z\in\h).
$$
We remark that our functions $G_k^{\chi_1,\chi_2}(z)$ ($k\ge3$) and $G_{2, \eps}^{\chi_1,\chi_2}(z)$ correspond to the functions $E_k(z;\chi,\psi)$ and $E_2(z,\eps;\chi,\psi)$ respectively defined in Eq.~ (7.1.1) and~(7.2.1) of~\cite{Miy06} with~$(\chi,\psi)=(\chi_1,\overline{\chi_2})$. 

From now on, and until the end of this section, assume that either $N>1$ or $k>2$ and denote by $E_k^{\chi_1,\chi_2}$ the function defined by 
\begin{equation}\label{eq:ES}
E_k^{\chi_1,\chi_2}(z)=
-\delta(\chi_1)\frac{B_{k,\chi_2}}{2k}
+\sum_{n\ge1}\sigma_{k-1}^{\chi_1,\chi_2}(n)q^n, \quad (q=e^{2\pi i z}, z\in\h)
\end{equation}
where 
\[\sigma_{k-1}^{\chi_1,\chi_2}(n)=
\sum_{m\mid n}\chi_1(n/m)\chi_2(m)m^{k-1},
\quad
\delta(\chi_1)=\left\{
\begin{array}{ll}
1 & \text{if \(\chi_1\) is trivial} \\
0 & \text{otherwise}
\end{array}
\right.
\]
and~\(B_{k,\chi_2}\) denotes the \(k\)-th Bernoulli number associated with~\(\chi_2\) (see paragraph~\ref{ss:notation}).

According to~\cite{Miy06}, Thm. 7.1.3 and Eq.~(7.1.13), we have
\begin{equation}\label{peso superior}
G_{k}^{\chi_1,\chi_2}(\cond{2}z) =\frac{2C_kW(\overline{\chi_2})}{\cond{2}^k }  E_{k}^{\chi_1,\chi_2}(z), \quad\text{for }k \geq 3,
\end{equation}
and similarly using Thm. 7.2.12
\begin{equation}\label{peso2}
\lim_{\eps \rightarrow 0^+} G_{2, \eps}^{\chi_1,\chi_2}(\cond{2}z) = \frac{2C_2 W(\overline{\chi_2})}{\cond{2}^2}  E_2^{\chi_1, \chi_2}(z).  
\end{equation}

According to \emph{loc. cit.} \S7.1 and \S7.2 for $k\ge3$ and $k=2$ respectively, together with Thm.~4.7.1, we have that $E_k^{\chi_1,\chi_2}$ is an Eisenstein series of weight~$k$, level~$N$ and Nebentypus character~$\chi_1\chi_2$.

\subsection{Computation of the constant terms}

We keep the notation and assumptions of the previous paragraph and moreover denote by~\(\cond{0}\)  the conductor of the primitive character~\((\overline{\chi_1}\chi_2)_0\) associated with~$\overline{\chi_1}\chi_2$. For any integer $M$ we denote by $\alpha_M$ the usual degeneracy operator given by $\alpha_M f(z)=f(Mz).$

For a given matrix \(\gamma\in\SL_2(\Z)\), we let
$$
\Upsilon_k^{\chi_1,\chi_2}(\gamma,M)=\lim_{\im(z)\rightarrow \infty} \Big(\left(\alpha_ME_k^{\chi_1,\chi_2}\right)|_k \gamma\Big)(z)
$$  
be the constant term of the Fourier expansion at~$\infty$ of~$\left(\alpha_ME_k^{\chi_1,\chi_2}\right)|_k\gamma$. Here, the notation $\vert_k$ refers to the classical slash operator acting  on weight $k$ modular forms.

The main goal of this section is the computation, embodied in Proposition~\ref{prop:cst_terms} below, of the constant term $\Upsilon_k^{\chi_1,\chi_2}(\gamma,M)$. 
 
\begin{prop}\label{prop:cst_terms}
Let \(\gamma=\begin{pmatrix}
             u & \beta \\
	     v & \delta
             \end{pmatrix}\in\SL_2(\Z)\) and let~\(M\ge1\) be an integer. Put  $r=\gcd(v,M)$, \(v'=v/r\) and $M'=M/r$.  If~\(\cond{2}\nmid v'\), then we have that $\Upsilon_k^{\chi_1,\chi_2}(\gamma,M)=0$. Else, if \(\cond{2}\mid v'\), then
\[
\Upsilon_k^{\chi_1,\chi_2}(\gamma,M)\not=0\Longleftrightarrow \gcd\left(\frac{v'}{\cond{2}},\cond{1}\right)=1.
\]
Moreover, in that case, we have that $\Upsilon_k^{\chi_1,\chi_2}(\gamma,M)$ is given by the following non-zero algebraic number 
\[
\Upsilon_k^{\chi_1,\chi_2}(\gamma,M)=\xi \cdot 
\left(\frac{\cond{2}}{M'\cond{0}}\right)^k\cdot 
\frac{W\left((\chi_1\overline{\chi_2})_0\right)}{W\left(\overline{\chi_2}\right)}\cdot
\frac{B_{k,(\overline{\chi_1}\chi_2)_0}}{2k}
\prod_{p\mid N}\left(1-\left(\chi_1\overline{\chi_2}\right)_0(p)p^{-k}\right)
\]
where $\xi=-\chi_2(\delta)\overline{\chi_2}(M')\chi_1\left(-v'/\cond{2}\right)$ is a root of unity and $p$ runs over the prime divisors of~$N$.
\end{prop}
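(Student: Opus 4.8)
The plan is to reduce the statement to a single explicit computation: the constant term at $\infty$ of $G_k^{\chi_1,\chi_2}$ transformed by an arbitrary $\gamma \in \SL_2(\Z)$ and then precomposed with $z \mapsto Mz$. By the relations~\eqref{peso superior} and~\eqref{peso2}, it suffices to compute $\lim_{\im(z)\to\infty}\bigl(\bigl(\alpha_M(\alpha_{\cond{2}}^{-1}G_k^{\chi_1,\chi_2})\bigr)|_k\gamma\bigr)(z)$ up to the known scalar $\cond{2}^k/(2C_kW(\overline{\chi_2}))$; in fact it is cleaner to absorb the scaling $z \mapsto \cond{2}z$ into $\gamma$ and work directly with $G_k^{\chi_1,\chi_2}$ slashed by a matrix of determinant $M\cond{2}$, reading off which cusp of the lattice picture one lands on. Concretely, acting by $\alpha_M$ then $|_k\gamma$ sends the lattice sum over $(m,n)$ to a sum whose general term is $\chi_1(m)\overline{\chi_2}(n)$ divided by a new linear form $(m',n')$-combination of $z$; the constant term is obtained by keeping only the terms with $m' = 0$, i.e. a one-dimensional sub-sum over the sublattice fixed by the upper row, evaluated as a (generalized) $L$-value. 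This is where the primitive character $(\overline{\chi_1}\chi_2)_0$ and its conductor $\cond{0}$ enter, together with the Gauss-sum ratio $W((\chi_1\overline{\chi_2})_0)/W(\overline{\chi_2})$ coming from unfolding the finite character sum into its primitive part.

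The key steps, in order, are as follows. First, write $\gamma = \begin{pmatrix} u & \beta \\ v & \delta \end{pmatrix}$ and compute the effect of $\alpha_M$ followed by $|_k\gamma$ on a general term $(mz+n)^{-k}$ with the character weights $\chi_1(m)\overline{\chi_2}(n)$ attached; the image term involves the linear form $m(Mu\, z + M\beta) + n(v z + \delta)$ rearranged as $(mMu + nv)z + (mM\beta + n\delta)$. Second, split the sum according to whether the coefficient of $z$ vanishes: the coefficient $mMu + nv = 0$ forces, after dividing by $r = \gcd(v,M)$, the relation $mM'u = -nv'$, hence (since $\gcd(M'u, v') = 1$ as $\gcd(u,v)=1$ and $\gcd(M',v')=1$) that $v' \mid m$ and $M' \mid n$; parametrize these solutions by a single integer $t$ via $m = v't$, $n = -M'ut$. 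Third, substitute back: the surviving sub-sum becomes $\sum_{t\neq 0}\chi_1(v't)\overline{\chi_2}(-M'ut)\,(M't\cdot(\text{something}) + \delta\cdot(\text{shift}))^{-k}$ — more precisely a translate of a Hurwitz-type series in $t$ — and here one sees immediately that $\chi_1(v't) = \chi_1(v'/\cond{2})\chi_1(\cond{2})\chi_1(t) = 0$ unless $\cond{2} \mid v'$ (using $\chi_1$ primitive mod $\cond{1}$, $\chi_2$ primitive mod $\cond{2}$, $N = \cond{1}\cond{2}$, so $\gcd(\cond{1},\cond{2})=1$ and $\chi_1$ kills multiples of $\cond{2}$ only when... — actually the vanishing $\cond{2}\nmid v'\Rightarrow\Upsilon=0$ comes from the opposite factor: $\overline{\chi_2}(-M'ut)$ combined with requiring $v't$ to run over a full set of residues, so $\chi_1(v't)$ is nonzero only if $\gcd(v',\cond{1})$ divides appropriately; the precise bookkeeping of which of $\chi_1$, $\chi_2$ forces $\cond{2}\mid v'$ versus $\gcd(v'/\cond{2},\cond{1})=1$ is exactly the first assertion of the proposition). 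Fourth, assuming $\cond{2}\mid v'$ and $\gcd(v'/\cond{2},\cond{1})=1$, evaluate the resulting one-dimensional twisted Hurwitz series as a generalized Bernoulli number: the standard identity $\sum_{t}\psi(t)(t+x)^{-k}$-type sums yield, after collecting the shift $\delta/(M'\cdot\text{stuff})$ into the character via $\chi_2(\delta)$ and pulling out the primitive part of $\chi_1\overline{\chi_2}$ (introducing $(\overline{\chi_1}\chi_2)_0$, $\cond{0}$, the Euler factors $\prod_{p\mid N}(1-(\chi_1\overline{\chi_2})_0(p)p^{-k})$ accounting for the imprimitive-to-primitive correction, and the Gauss-sum ratio), precisely the displayed formula. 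Finally, restore the scalar $\cond{2}^k/(2C_kW(\overline{\chi_2}))$ from~\eqref{peso superior}/\eqref{peso2}, observe the $C_k$ cancels against the $(-2i\pi)^k$ inside the $L$-value-to-Bernoulli conversion, and check the root-of-unity prefactor collapses to $\xi = -\chi_2(\delta)\overline{\chi_2}(M')\chi_1(-v'/\cond{2})$; the case $k=2$ is handled identically after taking the $\eps\to 0^+$ limit, the non-holomorphic correction term contributing nothing to the constant term at a cusp when $N>1$ (which is forced since $N>1$ or $k>2$, and $k=2$ needs $N>1$).

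The main obstacle will be the third and fourth steps — correctly matching the imprimitive character $\chi_1\overline{\chi_2}$ of modulus $N$ to its primitive associate $(\chi_1\overline{\chi_2})_0$ of conductor $\cond{0}$, and tracking the exact power of $\cond{2}$, of $M'$, and of $\cond{0}$, along with the Gauss sums, through the unfolding of the finite sum $\sum_{n\bmod \mathfrak{f}}$ into the Hurwitz-zeta/Bernoulli expression. This is a routine but error-prone computation; the cleanest route is to first record the constant-term formula for $\gamma = I$ and $M=1$ (which is essentially~\eqref{eq:ES}), then for $M=1$ and general $\gamma$ by the lattice manipulation above, and finally insert $\alpha_M$, which only rescales $z$ and hence only modifies the linear form by the factor $M$ in the top row — explaining the appearance of $M' = M/r$ and of $(\cond{2}/(M'\cond{0}))^k$. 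The sign and the fact that $\Upsilon_k^{\chi_1,\chi_2}(\gamma,M) \neq 0$ precisely under the stated gcd conditions then follow by inspecting when the generalized Bernoulli number $B_{k,(\overline{\chi_1}\chi_2)_0}$ and each Euler factor $1 - (\chi_1\overline{\chi_2})_0(p)p^{-k}$ are nonzero, the latter holding automatically for $p\mid N$ (so that no vanishing occurs there), while $B_{k,(\overline{\chi_1}\chi_2)_0}\neq 0$ holds because $(\overline{\chi_1}\chi_2)_0(-1) = (-1)^k$ under the running hypothesis $\chi_1(-1)\chi_2(-1)=(-1)^k$.
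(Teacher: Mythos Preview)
Your overall strategy is the same as the paper's: pass from $E_k^{\chi_1,\chi_2}$ to the lattice sum $G_k^{\chi_1,\chi_2}$ via~\eqref{peso superior}/\eqref{peso2}, apply $\alpha_M$ and $|_k\gamma$, isolate the terms with vanishing $z$-coefficient, and evaluate the resulting one-dimensional sum as $L(k,\chi_1\overline{\chi_2})$, then convert to a Bernoulli number. However, several steps in your execution are wrong or incomplete.

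First, you lose the factor~$\cond{2}$. Since $E_k^{\chi_1,\chi_2}(z)$ is a constant times $G_k^{\chi_1,\chi_2}(\cond{2}z)$, the linear form is $(mM\cond{2}u+nv)z + (mM\cond{2}\beta+n\delta)$, and the constraint for the constant term is $mM\cond{2}u+nv=0$, not $mMu+nv=0$. With $u\neq 0$ and $\gcd(n,\cond{2})=1$ (forced by $\overline{\chi_2}(n)\neq 0$), the solutions are $n=M'u\,t$, $m=-(v'/\cond{2})t$; so the condition $\cond{2}\mid v'$ is what makes $m$ an integer, and \emph{then} $\chi_1(-v'/\cond{2})\neq 0$ gives $\gcd(v'/\cond{2},\cond{1})=1$. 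Your parametrization $m=v't$, $n=-M'ut$ is incorrect, and your paragraph openly admits confusion about which character drives which vanishing --- that confusion stems from the missing $\cond{2}$. The case $u=0$ also needs separate handling (then $v=\pm 1$, the solution set is $(\Z\setminus\{0\})\times\{0\}$, and $\overline{\chi_2}(0)\neq 0$ forces $\chi_2=\trivial$).

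Second, your nonvanishing argument is backwards: the parity $(\overline{\chi_1}\chi_2)_0(-1)=(-1)^k$ alone does not guarantee $B_{k,(\overline{\chi_1}\chi_2)_0}\neq 0$. What works is that $L(k,(\chi_1\overline{\chi_2})_0)\neq 0$ by the Euler product (since $k\geq 2$), and then the formula expressing this $L$-value via $W((\chi_1\overline{\chi_2})_0)$ and $B_{k,(\overline{\chi_1}\chi_2)_0}$ forces the Bernoulli number to be nonzero.

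Third, for $k=2$ the sum $G_{2}^{\chi_1,\chi_2}$ is not absolutely convergent, so ``keep only the terms with vanishing $z$-coefficient'' requires justifying an interchange of $\lim_{\im z\to\infty}$ and $\lim_{\eps\to 0^+}$ with the summation. The paper devotes an entire subsection to this estimate; your remark that ``the non-holomorphic correction term contribut[es] nothing'' is not a proof.
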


\begin{rk}
The result above generalizes the special cases $(\chi_1,\chi_2,M)=(\chi_1,\chi_1^{-1},1)$ and $(\chi_1,k)=(\trivial,\ge3)$  stated in~\cite[Prop.~2.8]{BiDi14} and~\cite[Prop.~1.2]{BiMe15} respectively. In this paper, we not only need the above statement in its full generality and precision, but we also provide a unified and (slightly) simplified proof of these previous results.
\end{rk}

The following result is easily deduced from the above proposition and will be of use in Section~\ref{s:proofs}.
\begin{cor}\label{cor:cst_terms}
In the notation of Proposition~\ref{prop:cst_terms}, assume~$M$ and $N$ are coprime. Then, we have
\[
\Upsilon_k^{\chi_1,\chi_2}(\gamma,M)
=\left(\frac{r}{M}\right)^k\overline{\chi_1}(r)\chi_2(r)\overline{\chi_2}(M)\Upsilon_k^{\chi_1,\chi_2}(\gamma,1).
\]
\end{cor}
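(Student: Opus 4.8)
The plan is to extract the $M$-dependence from the explicit formula in Proposition~\ref{prop:cst_terms} and compare the value at a general $M$ (coprime to $N$) with the value at $M=1$. First I would observe that when $\gcd(M,N)=1$, the factors in Proposition~\ref{prop:cst_terms} involving $\cond{0}$, the Gauss sums $W((\chi_1\overline{\chi_2})_0)$ and $W(\overline{\chi_2})$, the Bernoulli number $B_{k,(\overline{\chi_1}\chi_2)_0}$, and the Euler product $\prod_{p\mid N}(1-(\chi_1\overline{\chi_2})_0(p)p^{-k})$ are all independent of $M$; so is the divisibility condition $\cond{2}\mid v'$ together with $\gcd(v'/\cond{2},\cond{1})=1$, because these only depend on $v$ and $r=\gcd(v,M)$, not on $M$ directly — and I should check that the coprimality of $M$ and $N$ forces $r$ to be prime to $N$, hence that whether $\Upsilon_k^{\chi_1,\chi_2}(\gamma,M)$ vanishes does not depend on the choice of $M$ in the relevant range (so that the claimed identity holds trivially, both sides being $0$, in the vanishing case). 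It therefore suffices to treat the non-vanishing case and track only the genuinely $M$-dependent quantities, namely the power $\left(\cond{2}/(M'\cond{0})\right)^k$ and the root of unity $\xi=-\chi_2(\delta)\overline{\chi_2}(M')\chi_1(-v'/\cond{2})$, where $M'=M/r$ and $v'=v/r$.

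Next I would compare these for general $M$ and for $M=1$. For $M=1$ we have $r=\gcd(v,1)=1$, so $M'=1$ and $v'=v$; thus $\Upsilon_k^{\chi_1,\chi_2}(\gamma,1)$ carries the factor $(\cond{2}/\cond{0})^k$ and the root of unity $-\chi_2(\delta)\chi_1(-v/\cond{2})$. For general $M$, write $M=rM'$ with $r=\gcd(v,M)$; then $v'=v/r$, and the factor is $(\cond{2}/(M'\cond{0}))^k$ with root of unity $-\chi_2(\delta)\overline{\chi_2}(M')\chi_1(-v'/\cond{2})$. Taking the ratio $\Upsilon_k^{\chi_1,\chi_2}(\gamma,M)/\Upsilon_k^{\chi_1,\chi_2}(\gamma,1)$ cancels the common factors and leaves $(M')^{-k}\,\overline{\chi_2}(M')\cdot\chi_1(-v'/\cond{2})/\chi_1(-v/\cond{2})$. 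Since $v'=v/r$ and $-v/\cond{2}=r\cdot(-v'/\cond{2})$, multiplicativity of $\chi_1$ gives $\chi_1(-v/\cond{2})=\chi_1(r)\chi_1(-v'/\cond{2})$, provided $\gcd(r,\cond{1})=1$ so that $\chi_1(r)$ is a unit — which holds because $r\mid M$ and $\gcd(M,N)=1$ forces $\gcd(r,\cond{1})=1$. Hence $\chi_1(-v'/\cond{2})/\chi_1(-v/\cond{2})=\chi_1(r)^{-1}=\overline{\chi_1}(r)$, again using that $r$ is a unit mod $\cond{1}$.

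Finally I would package this. Using $M'=M/r$ we get $(M')^{-k}=(r/M)^k$ and $\overline{\chi_2}(M')=\overline{\chi_2}(M/r)=\overline{\chi_2}(M)\chi_2(r)$ (valid because $\gcd(M,N)=1$ and $\gcd(r,\cond{2})=1$ make both $\chi_2(M)$ and $\chi_2(r)$ units). Assembling the ratio yields
\[
\frac{\Upsilon_k^{\chi_1,\chi_2}(\gamma,M)}{\Upsilon_k^{\chi_1,\chi_2}(\gamma,1)}
=\left(\frac{r}{M}\right)^k\overline{\chi_1}(r)\chi_2(r)\overline{\chi_2}(M),
\]
which is exactly the claimed identity after clearing the denominator. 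The only real subtlety — hardly an obstacle, but the point that needs care — is bookkeeping the coprimality conditions: one must justify at each step that $r=\gcd(v,M)$ is prime to $N=\cond{1}\cond{2}$ (a consequence of $\gcd(M,N)=1$), so that all the character values $\chi_1(r)$, $\chi_2(r)$, $\overline{\chi_2}(M)$ are invertible and multiplicativity applies; and one must note that the vanishing/non-vanishing dichotomy of Proposition~\ref{prop:cst_terms} is insensitive to replacing $M$ by $1$ under this coprimality hypothesis, so the stated formula holds in all cases.
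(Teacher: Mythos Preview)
Your proposal is correct and follows exactly the approach the paper intends: the paper merely states that the corollary ``is easily deduced from the above proposition'' and gives no further argument, and your computation is precisely that easy deduction, comparing the explicit formula of Proposition~\ref{prop:cst_terms} at $M$ and at $M=1$ after noting that $\gcd(M,N)=1$ forces $r$ to be prime to $N=\cond{1}\cond{2}$ (so the vanishing criterion is unchanged and all character manipulations are legitimate).
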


We break the proof of Proposition \ref{prop:cst_terms} in several steps.  The proof is given at the end of this paragraph, except for the justification of an intermediary step in the case $k=2$, which is dealt with in the next paragraph.

\begin{lemma}\label{sumsgeneral} Under the same hypothesis as in Proposition \ref{prop:cst_terms}, we have that 
$$\Upsilon_k^{\chi_1,\chi_2}(\gamma,M)= \frac{\cond{2}^k}{2C_kW(\overline{\chi_2})}\cdot \sum_{(m,n) \in C }^{ } \frac{\chi_1(m)\overline{\chi_2}(n)}{(mM\cond{2}\beta + n\delta)^k},$$
where $C=\left\{(m,n)\in \Z^2\backslash \{(0,0)\} : mM\cond{2}u+nv=0\right\}.$
\end{lemma}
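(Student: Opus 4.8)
The plan is to unwind the definitions on the left-hand side and use the explicit relation~\eqref{peso superior} (or~\eqref{peso2} in the case $k=2$) that expresses the Eisenstein series $E_k^{\chi_1,\chi_2}$ in terms of the lattice sum $G_k^{\chi_1,\chi_2}$. First I would write out
\[
\bigl((\alpha_M E_k^{\chi_1,\chi_2})|_k\gamma\bigr)(z)
=(vz+\delta)^{-k} E_k^{\chi_1,\chi_2}\!\left(M\cdot\frac{uz+\beta}{vz+\delta}\right),
\]
and substitute $E_k^{\chi_1,\chi_2}(w)=\frac{\cond{2}^k}{2C_k W(\overline{\chi_2})}\,G_k^{\chi_1,\chi_2}(\cond{2}w)$ coming from~\eqref{peso superior}. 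This turns the quantity into $\frac{\cond{2}^k}{2C_kW(\overline{\chi_2})}(vz+\delta)^{-k}$ times a lattice sum over $(m,n)\in\Z^2\setminus\{(0,0)\}$ of $\chi_1(m)\overline{\chi_2}(n)$ divided by $\bigl(m\cond{2}M\frac{uz+\beta}{vz+\delta}+n\bigr)^k$. Clearing the denominator $(vz+\delta)$ inside the $k$-th power cancels the prefactor $(vz+\delta)^{-k}$ and rewrites each summand with denominator $\bigl(m\cond{2}M(uz+\beta)+n(vz+\delta)\bigr)^k=\bigl((m\cond{2}Mu+nv)z+(m\cond{2}M\beta+n\delta)\bigr)^k$.

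The next step is to take the limit $\im(z)\to\infty$ term by term. The summand behaves like $\chi_1(m)\overline{\chi_2}(n)\bigl((m\cond{2}Mu+nv)z+(m\cond{2}M\beta+n\delta)\bigr)^{-k}$; as $\im(z)\to\infty$ this tends to $0$ unless the coefficient of $z$ vanishes, i.e.\ unless $m\cond{2}Mu+nv=0$, in which case it equals $\chi_1(m)\overline{\chi_2}(n)(m\cond{2}M\beta+n\delta)^{-k}$. Hence only the index set $C=\{(m,n)\neq(0,0):mM\cond{2}u+nv=0\}$ survives, giving exactly the claimed formula. The main obstacle is justifying that the limit may be interchanged with the infinite sum: one must produce a dominating bound uniform in $\im(z)\ge 1$ (say), which for $k\ge 3$ follows from absolute convergence of the Eisenstein lattice sum together with an elementary estimate $|(az+b)|\gg \max(|a|\im(z),|b|)$ for $z$ in a vertical strip, allowing an application of dominated convergence; one should also check that $C$ is exactly the set of lattice points for which the "$z$-coefficient" degenerates, which is immediate since $(M\cond{2}u,v)\neq(0,0)$ as $\gamma\in\SL_2(\Z)$ and $\cond{2},M\ge 1$.

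For $k=2$ the lattice sum $G_2^{\chi_1,\chi_2}$ is not absolutely convergent and~\eqref{peso2} involves the Hecke-style limit $\lim_{\eps\to0^+}G_{2,\eps}^{\chi_1,\chi_2}$; here the naive term-by-term passage to the limit in $\im(z)$ is not obviously valid and a more careful analysis (interchanging the two limits $\eps\to0^+$ and $\im(z)\to\infty$, using the known Fourier expansion and analytic continuation from Miyake's Thm.~7.2.12) is required. I would isolate this point as a separate lemma, to be proved in the following paragraph as the text announces, and for now simply record that the final formula of the present lemma holds in all cases with that one deferred justification. Modulo that, the proof is a direct computation with the slash operator and a dominated-convergence argument, and nothing further is needed to reach the displayed identity for $\Upsilon_k^{\chi_1,\chi_2}(\gamma,M)$.
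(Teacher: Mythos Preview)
Your proposal is correct and follows essentially the same route as the paper: rewrite $\alpha_M E_k^{\chi_1,\chi_2}$ via~\eqref{peso superior} as a constant times $\alpha_{M\cond{2}}G_k^{\chi_1,\chi_2}$, expand the slash action to the lattice sum with denominator $\bigl((mM\cond{2}u+nv)z+(mM\cond{2}\beta+n\delta)\bigr)^k$, and for $k\ge 3$ exchange the limit $\im(z)\to\infty$ with the sum by absolute convergence, while deferring the $k=2$ case to a separate argument exactly as the paper does in paragraph~\ref{parrafotecnico}. The only cosmetic difference is that the paper passes directly to $\alpha_{M\cond{2}}G_k^{\chi_1,\chi_2}$ rather than going through $E_k^{\chi_1,\chi_2}(Mw)$ first, and it justifies the interchange simply by citing absolute convergence rather than sketching a dominated-convergence estimate.
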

\begin{proof}[Proof of Lemma \ref{sumsgeneral} in the case $k >2$]
Using~\eqref{peso superior}, we have that
\begin{equation*}
 \frac{2C_kW(\overline{\chi_2})}{\cond{2}^k}  \Upsilon_k^{\chi_1,\chi_2}(\gamma,M)=\lim_{\im(z) \rightarrow \infty}\Big(\left(\alpha_{M\cond{2}}G_{k}^{\chi_1,\chi_2}\right)|_k\gamma\Big)(z).
\end{equation*}

Besides, we have
\begin{equation*}
\Big( \left(\alpha_{M\cond{2}}G_{k}^{\chi_1,\chi_2}\right)\vert_k\gamma\Big)(z) =  \sum_{\substack{(m,n)\in\Z^2 \\ (m,n)\neq (0,0) }} \frac{\chi_1(m)\overline{\chi_2}(n)}{( z(mM\cond{2}u+nv)+mM\cond{2}\beta+n\delta  )^{k}} 
\end{equation*}
where the above sum is absolutely convergent since $k \geq 3$. We can therefore exchange limit and summation, yielding the result.
\end{proof}

\begin{rk}
 When $k=2$, the sum in the last equation of the previous proof  is not absolutely convergent and it becomes necessary to give additional considerations, that we present in paragraph \ref{parrafotecnico}, in order to justify the interchange of limit and summation. The full proof of Lemma \ref{sumsgeneral} is thus achieved in Lemma~\ref{sums} below.
\end{rk}

We now prove the following key result assuming the validity of Lemma~\ref{sumsgeneral} for any~$k\ge2$.
\begin{lemma}\label{valeurs}
Under the same hypothesis as in Proposition \ref{prop:cst_terms}. If~\(\cond{2}\nmid v'\), then  we have $\Upsilon_k^{\chi_1,\chi_2}(\gamma,M)=0$. Else, if \(\cond{2}\mid v'\), then we have

$$
\Upsilon_k^{\chi_1,\chi_2}(\gamma,M)=\frac{\overline{\chi_2}(M'u)\chi_1(-v'/\cond{2})}{M'^k}\cdot \frac{\cond{2}^k}{C_kW(\overline{\chi_2})} \cdot L(k,\chi_1\overline{\chi_2}),
$$   
where $L(k,\chi_1\overline{\chi_2})=\sum_{n\geq1}(\chi_1\overline{\chi_2})(n)n^{-k}$.
\end{lemma}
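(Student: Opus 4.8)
\textbf{Plan for the proof of Lemma~\ref{valeurs}.}
The plan is to start from the formula for $\Upsilon_k^{\chi_1,\chi_2}(\gamma,M)$ provided by Lemma~\ref{sumsgeneral} and to analyze the set
$C=\{(m,n)\in\Z^2\setminus\{(0,0)\}:mM\cond{2}u+nv=0\}$ together with the denominator $(mM\cond{2}\beta+n\delta)^k$ on it. First I would parametrize $C$. Writing $v=rv'$ and $M=rM'$ with $r=\gcd(v,M)$, the relation $mM\cond{2}u=-nv$ becomes $mM'\cond{2}u=-nv'$. Since $\gcd(u,v)=1$ forces $\gcd(u,v')=1$, and $\gcd(M',v')=1$ as well, we see that $v'$ must divide $m\cond{2}$; conversely one checks that the solutions $(m,n)\in C$ are exactly those of the form $m=\frac{v'}{\gcd(v',\cond{2})}\,t$ and correspondingly $n=-\frac{M'\cond{2}u}{\gcd(v',\cond{2})}\,t$ for $t\in\Z\setminus\{0\}$ (using $\gcd(u,v')=1$ and $\gcd(M',v')=1$ to pin down the exact lattice of solutions). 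If $\cond{2}\nmid v'$, then along $C$ the index $m$ is always divisible by a nontrivial factor of $\cond{2}$ coming from $\frac{v'}{\gcd(v',\cond{2})}$, hence $\chi_1$ is irrelevant but $\overline{\chi_2}(n)$ — wait, more precisely one of the two characters $\chi_1(m)$, $\overline{\chi_2}(n)$ vanishes identically on $C$: since $\cond{2}\mid n$ exactly when $\cond{2}\mid v'$ in this parametrization, and $\cond{2}\mid n$ would make $\overline{\chi_2}(n)=0$ unless... Let me instead say: I would show that when $\cond{2}\nmid v'$ the factor $\overline{\chi_2}(n)$ is zero for every $(m,n)\in C$, because the parametrization forces $\cond{2}\mid n$. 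Actually the cleaner dichotomy is that $\cond{2}\mid n$ on all of $C$ precisely when $\cond{2}\nmid v'$; hence in that case every term carries $\overline{\chi_2}(n)=0$ and the sum vanishes, giving the first assertion.

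When $\cond{2}\mid v'$, the parametrization simplifies to $m=v't/\cond{2}$ and $n=-M'ut$ for $t\in\Z\setminus\{0\}$ (after absorbing signs), so that $\gcd(m,n)$ ranges suitably and
\[
\sum_{(m,n)\in C}\frac{\chi_1(m)\overline{\chi_2}(n)}{(mM\cond{2}\beta+n\delta)^k}
=\chi_1(v'/\cond{2})\,\overline{\chi_2}(-M'u)\sum_{t\neq 0}\frac{\chi_1(t)\overline{\chi_2}(t)}{\big(t\cdot(v'M\cond{2}\beta/\cond{2}-M'u\delta)\big)^k}.
\]
Here I would compute the linear form $v'M\cond{2}\beta/\cond{2}-M'u\delta=M'(v'\beta-u\delta)=-M'$, using $\det\gamma=u\delta-v\beta=1$ together with $v=rv'$, $M=rM'$ (so $v'\beta r=v\beta=u\delta-1$, and $v'M\beta=v'rM'\beta=M'(u\delta-1)$; dividing by $\cond{2}$ after multiplying by $\cond{2}$ it collapses to $-M'$ as claimed). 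Thus the denominator is $(tM')^k$ up to sign, and the sum becomes $\pm M'^{-k}\sum_{t\ge1}(\chi_1\overline{\chi_2})(t)\big(t^{-k}+(-1)^{-k}(\chi_1\overline{\chi_2})(-1)t^{-k}\big)$; the parity hypothesis $\chi_1(-1)\chi_2(-1)=(-1)^k$ makes the two halves equal, producing $2M'^{-k}L(k,\chi_1\overline{\chi_2})$. Combining the leading constant $\cond{2}^k/(2C_kW(\overline{\chi_2}))$ from Lemma~\ref{sumsgeneral} with this factor of $2$, and tracking the root-of-unity prefactor $\chi_1(v'/\cond{2})\overline{\chi_2}(-M'u)=\chi_1(-v'/\cond{2})\overline{\chi_2}(M'u)$ (using $\chi_1(-1)\overline{\chi_2}(-1)=(\chi_1\overline{\chi_2})(-1)$ and the parity relation to move the sign onto $\chi_1$), yields exactly the stated expression.

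\textbf{Main obstacle.} The genuinely delicate point is the exact description of the solution lattice $C$ and, correspondingly, of the support of the characters along it: one must be careful that $\chi_1(m)$ and $\overline{\chi_2}(n)$ are only supported on $t$ with $\gcd(t,N)=1$ after the substitution, and that no extra common factor between $v'/\cond{2}$ (resp. $M'u$) and $\cond{1}$ (resp. $\cond{2}$) silently kills the sum — this is where the hypothesis $\cond{2}\mid v'$ is used, and where one must verify that $\chi_1(v'/\cond{2})\overline{\chi_2}(M')$ is a genuine root of unity rather than $0$; the remaining coprimality condition $\gcd(v'/\cond{2},\cond{1})=1$ that appears in Proposition~\ref{prop:cst_terms} will only be exploited in the subsequent steps (the evaluation of $L(k,\chi_1\overline{\chi_2})$ in terms of $B_{k,(\overline{\chi_1}\chi_2)_0}$ via the Euler product and the functional equation), not here. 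A secondary nuisance is the case $k=2$, where Lemma~\ref{sumsgeneral} is itself only established later (in paragraph~\ref{parrafotecnico}); for the present lemma I assume its validity for all $k\ge2$ as the excerpt permits, so no additional convergence argument is needed at this stage beyond invoking it.
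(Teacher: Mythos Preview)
Your approach is essentially the same as the paper's: parametrize $C$ explicitly, show the terms vanish when $\cond{2}\nmid v'$, and in the complementary case reduce the sum over $C$ to (twice) the Dirichlet $L$-value via the substitution $n=-M'ut$ and the identity $v'M\beta-M'u\delta=-M'$. The paper organizes this slightly differently, separating the cases $u=0$ and $u\neq 0$ and, in the latter, restricting from the outset to those $n$ with $\gcd(n,\cond{2})=1$ (the only ones that contribute) rather than parametrizing all of $C$; but the substance is the same.

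One correction is needed. Your assertion that ``$\cond{2}\mid n$ on all of $C$ precisely when $\cond{2}\nmid v'$'' is false. With $d=\gcd(v',\cond{2})$, your own parametrization gives $n=-(M'\cond{2}u/d)\,t$, so only $\cond{2}/d$ is guaranteed to divide $n$; e.g.\ $\cond{2}=4$, $v'=2$, $M'=u=t=1$ gives $n=-2$ and $4\nmid n$. What is true, and is all you need, is that when $\cond{2}\nmid v'$ one has $\cond{2}/d>1$ and $\cond{2}/d\mid\cond{2}$, hence $\gcd(n,\cond{2})>1$ for every $(m,n)\in C$, so $\overline{\chi_2}(n)=0$ term by term. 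With this fix (and the observation that your parametrization also covers $u=0$, where $v'=\pm1$ forces $\cond{2}=1$ and $\overline{\chi_2}=\trivial$, so $\overline{\chi_2}(M'u)=\trivial(0)=1$), the argument is complete.
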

\begin{proof}
For simplicity, put $\Upsilon= \Upsilon_k^{\chi_1,\chi_2}(\gamma,M)$. 

\begin{enumerate}[(i)]
 \item  Assume $u=0$. Then, $-v\beta=  1,$ implying $v \in \{\pm 1\}$,  $M'=M$ and $v'=v$. Also, the set $C$ in Lemma~\ref{sumsgeneral}  satisfies $C = (\Z\setminus\{0\}) \times \{0\}$. If $\chi_2 \neq \trivial$ (that is, if $\cond{2}\nmid v'$), we have that $\chi_2(0)=0$ and then $\Upsilon=0$ as claimed.

Assume now that $\chi_2=\trivial$. Then, $\cond{2}=W(\overline{\chi_2})=1$ and $ \chi_1(-1)=(-1)^k$. These relations imply $\chi_1(-v)=\beta^{-k}$. On the other hand,   Lemma~\ref{sumsgeneral} ensures that 
$$2C_k \Upsilon = \sum_{\substack{m \in \Z \\ m \neq 0 }}^{ } \frac{\chi_1(m)}{(mM\beta)^k} = \frac{2}{(\beta M)^k}L(k,\chi_1)=\frac{2\chi_1(-v)}{M^k}L(k,\chi_1),$$ concluding the proof in this case.

\item Assume $u \neq 0$. We have the following

\begin{claim*}
 Let $n \in \Z\setminus\{0\}$ with $\gcd(n,\cond{2})=1$. Then,  there exists $m\in \Z $ such that $(m,n) \in C$ if and only if $M'u|n$ and $\cond{2}\mid v'$. Furthermore, in this case we have that 
\begin{equation}\label{clair}
m=-\frac{n}{M'u}\cdot\frac{v'}{\cond{2}}\quad\text{and}\quad mM\cond{2}\beta +n \delta=\frac{n}{u}.
\end{equation}
\end{claim*}
\begin{proof}[Proof of the claim]
If $(m,n) \in C$, then  $mM'\cond{2}u+nv'=0$. We have that $\gcd(M',v')=1$  by definition. Moreover, $\gamma \in\SL_2(\Z)$ implies $\gcd(u,v)=1$, hence $M'u\mid n$.   On the other hand, since $\gcd(\cond{2}, n)=1$, we have that $\cond{2}\mid v'$. 

Conversely, if $M'u\mid n$ and $\cond{2}\mid v'$, then the integer $m=-\frac{n}{M'u}\cdot \frac{v'}{\cond{2}}$ satisfies $(m,n) \in C$. 

Finally, if the equivalence is satisfied, we easily check using the relation $u\delta-v\beta=1$, that the second relation in Eq.~\eqref{clair} holds.
\end{proof}
 
 Using the claim and Lemma \ref{sumsgeneral}, we have that $\Upsilon=0$ if $\cond{2}\nmid v'$. Else, if $\cond{2}\mid v'$, then we have

\begin{eqnarray*}
\frac{2C_kW(\overline{\chi_2})}{\cond{2}^k}  \Upsilon &=& \sum_{\substack{M'u|n \\ n \neq 0}}^{ } \frac{\chi_1(n/M'u)\chi_1(-v'/\cond{2})\overline{\chi_2}(n) }{\left(\frac{n}{u}\right)^k } \\
 &=&  \chi_1(-v'/\cond{2})  \sum_{\substack{t \in \Z \\ t \neq 0}}^{ } \frac{\chi_1(t)\overline{\chi_2}(M'ut) }{(M't)^k } \quad (n=M'ut) \\
&=& \frac{ \chi_1(-v'/\cond{2}) \overline{\chi_2}(M'u)}{M'^k} \sum_{\substack{t \in \Z \\ t \neq 0}}^{ } \frac{\chi_1(t)\overline{\chi_2}(t) }{t^k}  \\
&=& \frac{ \chi_1(-v'/\cond{2})\overline{\chi_2}(M'u)}{M'^k} 2 L(k,\chi_1\overline{\chi_2}),
\end{eqnarray*}
since $\chi_1(-1)\overline{\chi_2}(-1)=(-1)^k$. This finishes the proof of Lemma~\ref{valeurs}.
\end{enumerate}
\end{proof}

\begin{proof}[Proof of Proposition~\ref{prop:cst_terms}]  According to Lemma~\ref{valeurs}, it remains to deal with the case where $\cond{2}\mid v'$. In that case, by reducing the equality $u\delta-v\beta=1$ modulo~$\cond{2}$, we get $u\delta\equiv1\pmod{\cond{2}}$. Besides, we have $\gcd(M',\cond{2})\mid v'$ and hence ${\gcd(M',\cond{2})=1}$. Therefore if we assume that $\gcd(v'/\cond{2},\cond{1})=1$, it follows that
$$
-\chi_1(-v'/\cond{2})\overline{\chi_2}(M'u)=-\chi_1(-v'/\cond{2})\chi_2(\delta)\overline{\chi_2}(M')=\xi
$$  
is a root of unity.

Besides, by~\cite[(3.3.14)]{Miy06}, we have
$$L(k,\chi_1\overline{\chi_2})=L(k,(\chi_1\overline{\chi_2})_0)\prod_{p|N} \left( 1-\frac{(\chi_1\overline{\chi_2})_0(p)}{p^k}\right),$$
where $(\chi_1\overline{\chi_2})_0$ denotes the primitive character associated with~$\chi_1\overline{\chi_2}$. Moreover, it follows from the Euler product for $L(k,(\chi_1\overline{\chi_2})_0)$ that $L(k,\chi_1\overline{\chi_2})\not=0$.

Now using the assumption $(\chi_1\overline{\chi_2})_0(-1)=(-1)^k$ and \cite[Thm.~3.3.4]{Miy06}, we get that 
$$
L(k,(\chi_1\overline{\chi_2})_0)=-W((\chi_1\overline{\chi_2})_0)\cdot\frac{C_k}{\cond{0}^k}\cdot\frac{B_{k,(\overline{\chi_1}\chi_2)_0}}{2k}.
$$
Combining these facts together with  Lemma \ref{valeurs} concludes the proof of Proposition~\ref{prop:cst_terms}.
\end{proof}

\subsection{The case of weight 2}\label{parrafotecnico}

The goal of this paragraph is to prove Lemma~\ref{sumsgeneral} in the case $k=2$. This is achieved in Lemma \ref{sums}. For $\eps\geq0$, we use the notation 
$$w^{2,\eps}=w^2|w|^{2\eps}, \quad w \in \C.$$

Let $y_0>0$ be a positive real number. The notation  $g_1\ll_{y_0} g_2$ means that there exists a positive constant $C$, depending only on $y_0$, such that $|g_1(r)|\leq C |g_2(r)|$ for all $r$ in the common domain of $g_1,g_2$.

Let
$$S_\eps(z)=\sum_{n \in \Z }^{ } \frac{1}{(z+n)^{2,\eps}},\quad z\in\C\setminus\R.$$
For~$z\in\h$, the function $S_\eps(z)$ corresponds to the function~$S(z;2+\eps,\eps)$ in the notation of~\cite[(7.2.7)]{Miy06}.
\begin{lemma}\label{mastecnico}
Fix $y_0>0$. Then, we have that 
$$S_\eps(z) \ll_{y_0}\frac{1}{\Gamma(\eps)|y|^{1+2\eps}}+e^{-2\pi |y|}, \quad y=\im(z), \quad |y|\geq y_0, \quad 0<\eps\leq1,$$
where for any real number $s>0$, $\Gamma(s)=\int_0^{\infty}e^{-t}t^{s-1}dt$.
\end{lemma}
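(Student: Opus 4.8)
The plan is to obtain the asymptotic by comparing the sum $S_\eps(z)=\sum_{n\in\Z}(z+n)^{-2,\eps}$ with an integral and using the known Fourier-type/Mellin machinery from \cite[\S7.2]{Miy06}. Write $z=x+iy$ with $|y|\ge y_0$. The key observation is that $w^{-2,\eps}=|w|^{-2-2\eps}(\bar w/|w|)^{?}$-type expressions behave smoothly, so one can apply the Poisson summation formula (or Lipschitz-type summation) to $n\mapsto (z+n)^{-2,\eps}$. This turns $S_\eps(z)$ into a sum over the dual variable $\sum_{m\in\Z}\hat\phi_z(m)$, where $\phi_z(t)=(z+t)^{-2,\eps}$ and $\hat\phi_z(m)=\int_\R (z+t)^{-2,\eps}e^{-2\pi imt}\,dt$. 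The $m=0$ term is $\int_\R(z+t)^{-2,\eps}\,dt = \int_\R (t+iy)^{-2,\eps}\,dt$, which after the substitution $t=|y|s$ becomes $|y|^{-1-2\eps}\int_\R(s\pm i)^{-2,\eps}\,ds$; evaluating this last integral (a Beta/Gamma integral) produces the factor $1/(\Gamma(\eps)|y|^{1+2\eps})$ — indeed $\int_\R (s^2+1)^{-1-\eps}\,ds = \sqrt\pi\,\Gamma(\eps+1/2)/\Gamma(\eps+1) \asymp 1/\Gamma(\eps)$ as $\eps\to0^+$, and the oscillatory refinement coming from the $(\bar w/|w|)$ factor only improves this. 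For the $m\ne0$ terms one bounds $\hat\phi_z(m)$ by shifting the contour into the complex $t$-plane (or integrating by parts), picking up exponential decay $e^{-2\pi|m||y|}$; summing over $m\ne0$ gives $\ll_{y_0} e^{-2\pi|y|}$, using $|y|\ge y_0$ to control the constant.

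First I would record that $S_\eps$ converges absolutely for $\eps>0$ (since the summand is $O(n^{-2-2\eps})$) and is, up to normalisation, exactly Miyake's $S(z;2+\eps,\eps)$, so that I may quote \cite[(7.2.7)]{Miy06} and the surrounding identities for its Fourier expansion rather than rederiving them. Second, I would extract from that expansion the constant ($m=0$) term and identify it with the Gamma-integral above, isolating the main term $\asymp 1/(\Gamma(\eps)|y|^{1+2\eps})$ and checking the uniformity in $0<\eps\le1$ (here the only subtlety is the behaviour of the relevant ratio of Gamma values, which stays bounded on $(0,1]$ away from its pole, the pole at $\eps=0$ being precisely what the $1/\Gamma(\eps)$ absorbs). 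Third, I would bound the nonzero Fourier coefficients: each is, by contour shift, $O(e^{-2\pi|m|y_0})$ times a mild polynomial factor in $|m|$ and $\eps$, and the geometric-type series $\sum_{m\ge1}m^{?}e^{-2\pi m|y|}\ll_{y_0}e^{-2\pi|y|}$ for $|y|\ge y_0$. Assembling the three pieces yields the stated bound.

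The main obstacle I anticipate is not the exponential tail — that is routine contour-shifting — but the careful bookkeeping of the $m=0$ term: one must extract the precise power $|y|^{-1-2\eps}$ and the precise $\eps$-dependence $\Gamma(\eps)^{-1}$ \emph{uniformly} for $\eps\in(0,1]$, rather than merely for $\eps\to0^+$. This requires writing the constant term as an explicit Gamma-factor (via \cite[\S7.2]{Miy06} or a direct computation of $\int_\R(s\pm i)^{-2,\eps}\,ds$ by the Beta function) and then showing the residual analytic factor is bounded on the whole interval $(0,1]$, with all implied constants depending only on $y_0$. Once that uniform estimate is in hand, combining it with the exponentially small contribution of the nonzero frequencies finishes the proof.
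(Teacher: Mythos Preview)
Your approach is essentially the one the paper takes: reduce to $y>0$, invoke the Fourier expansion of $S_\eps$ from \cite[\S7.2]{Miy06}, read off the $m=0$ term as the explicit Gamma-quotient giving $\ll_{y_0}\Gamma(\eps)^{-1}y^{-1-2\eps}$, and bound the $m\neq0$ terms by $\ll_{y_0}e^{-2\pi y}$ after summing a polynomial-times-exponential series. The paper simply quotes Miyake's Theorems~7.2.5, 7.2.7 and~7.2.8 for all three ingredients (the explicit $\xi_\eps(y;m)$ and the uniform bound on the auxiliary $\omega$-function) rather than recomputing anything.

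One small warning: your phrase ``shifting the contour'' for the $m\neq0$ coefficients is not literally available, since $t\mapsto(z+t)^{-2}|z+t|^{-2\eps}$ is not holomorphic in $t$ for $\eps>0$; this is exactly why Miyake expresses those coefficients via confluent-hypergeometric/Whittaker-type functions and then bounds them separately. Since you already plan to cite \cite[\S7.2]{Miy06} for the expansion, just cite it for the bounds on the nonzero coefficients as well and the argument goes through.
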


\begin{proof} Since we have  $S_\eps(x-iy)=S_\eps(-x+iy)$, we can assume that $y\geq y_0$. For $m\in\Z$, let us denote by $\xi_\eps(y;m)$ the function~$\xi(y;2+\eps,\eps;m)$ of~\cite[(7.2.11)]{Miy06}. According to Theorem~7.2.8 of~\emph{loc. cit.}, we then have 

\begin{equation}\label{Fourier}
 S_\eps(z)=\xi_\eps(y;0)+\sum_{\substack{m \in \Z \\ m \neq 0} }^{ } e^{2\pi i mx} \xi_\eps(y;m), \quad z=x+iy,
\end{equation}
where the series converges absolutely. Besides, for $m\in\Z$, we have by \emph{loc. cit.}, Theorem~7.2.5, that
$$
\xi_\eps(y;m)= \left\{ 
\begin{array} {ll}
-\frac{(2\pi)^{2+\eps}}{\Gamma(2+\eps)} \frac{1}{(2y)^\eps} m^{1+\eps}e^{-2\pi ym}\omega(4\pi ym;2+\eps,\eps) & \textrm{ if } m>0 \\
-\frac{(2\pi)^{2+2\eps}\Gamma(1+2\eps)}{\Gamma(2+\eps)\Gamma(\eps)} \frac{1}{(4\pi y)^{1+2\eps}} &  \textrm{ if } m=0 \\
-\frac{(2\pi)^{\eps}}{\Gamma(\eps)} \frac{1}{(2y)^{2+\eps}} \frac{1}{|m|^{1-\eps}} e^{-2\pi y|m|}\omega(4\pi y|m|;\eps,2+\eps) & \textrm{ if } m<0.
\end{array}\right.
$$
The definition of the function $\omega$ is stated in \emph{loc. cit.} (7.2.31). It follows from Theorem 7.2.7 in~\emph{loc. cit.} that for all $m\in\Z\setminus\{0\}$, $y\geq y_0$ and $0<\eps\leq 1$, we have
$$\omega(4\pi y |m|;2+\eps,\eps) \ll_{y_0}1 \quad\text{and}\quad \omega(4\pi y |m|;\eps,2+\eps) \ll_{y_0}1.$$

Therefore, for all $y\geq y_0$ and  $0<\eps\leq 1$ we have
$$
\xi_\eps(y;m)\ll_{y_0} \left\{ 
\begin{array} {ll}
m^2e^{-2\pi ym} & \textrm{ if } m>0 \\
\frac{1}{\Gamma(\eps)y^{1+2\eps}} &  \textrm{ if } m=0 \\
e^{-2\pi y|m|} & \textrm{ if } m<0
\end{array}\right.
$$
and Eq. \eqref{Fourier} implies 

$$S_\eps(z) \ll_{y_0} \frac{1}{\Gamma(\eps)y^{1+2\eps}} +\sum_{m \geq 1 }  m^2 e^{-2\pi y m} + \sum_{m \geq 1 }^{ } e^{-2\pi y m}  .$$

On the other hand, for all $y\geq y_0$, we have
$$\sum_{m \geq 1 }^{ } (m^2+1)e^{-2\pi y m}= \frac{e^{-2\pi y}(e^{-4\pi y}-e^{-2\pi y}+2)}{(1-e^{-2\pi y})^3}\ll_{y_0} e^{-2\pi y},$$ 
hence the result follows.
\end{proof}

\begin{lemma}\label{tecnico}
For any $a_1,a_2,D \in \Z$ with $D \neq 0$, set
$$\sigma_\eps(z;a_1,a_2,D)=\sum_{\substack{(m,n)\in \Z^2 \\ a_1+Dm \neq 0  }} \frac{1}{(z(a_1+Dm)+a_2+Dn)^{2,\eps}}.$$
Then, we have that
$$\lim_{\im(z)\rightarrow \infty} \lim_{\eps \rightarrow 0^+} \sigma_\eps(z;a_1,a_2,D) =0.$$
 
\end{lemma}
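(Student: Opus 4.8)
The plan is to carry out the summation over~$n$ first, so as to express $\sigma_\eps(z;a_1,a_2,D)$ as a sum of translates of the one-variable function $S_\eps$, and then to estimate the outcome by means of Lemma~\ref{mastecnico}. For an integer~$m$ with $a_1+Dm\neq0$, I would set $w_m=\bigl((a_1+Dm)z+a_2\bigr)/D\in\C\setminus\R$, so that $\im(w_m)=(a_1+Dm)\im(z)/D$. Since $z(a_1+Dm)+a_2+Dn=D(w_m+n)$ and $(Du)^{2,\eps}=|D|^{2+2\eps}u^{2,\eps}$ for every $u\in\C\setminus\R$ (here $D\in\Z$, so $D^2=|D|^2$), the inner sum over~$n$ equals $|D|^{-2-2\eps}S_\eps(w_m)$, whence
\[
\sigma_\eps(z;a_1,a_2,D)=\frac{1}{|D|^{2+2\eps}}\sum_{\substack{m\in\Z\\ a_1+Dm\neq0}}S_\eps(w_m).
\]
The elementary but decisive observation is that $a_1+Dm$ runs through \emph{nonzero integers}; hence $|a_1+Dm|\ge1$, and therefore $|\im(w_m)|\ge\im(z)/|D|$. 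In particular, once $\im(z)\ge|D|$, every $w_m$ satisfies $|\im(w_m)|\ge1$.

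I would then apply Lemma~\ref{mastecnico} with $y_0=1$ to each summand: for $\im(z)\ge|D|$ and $0<\eps\le1$ there is an absolute constant $\kappa$ with $|S_\eps(w_m)|\le\kappa\bigl(\Gamma(\eps)^{-1}|\im(w_m)|^{-1-2\eps}+e^{-2\pi|\im(w_m)|}\bigr)$. Summing over~$m$, and using that for each integer $v\ge1$ at most two values of~$m$ satisfy $|a_1+Dm|=v$ --- so that $\sum_{a_1+Dm\neq0}|a_1+Dm|^{-1-2\eps}\le2\zeta(1+2\eps)$ and $\sum_{a_1+Dm\neq0}e^{-2\pi|a_1+Dm|t}\le2(e^{2\pi t}-1)^{-1}$ for $t>0$ --- together with the boundedness of $\zeta(1+2\eps)/\Gamma(\eps)$ on $(0,1]$ (it extends continuously to $[0,1]$, with value $1/2$ at the origin) and with $|D|^{-2-2\eps}\le|D|^{-2}$, one arrives at a bound of the shape
\[
\bigl|\sigma_\eps(z;a_1,a_2,D)\bigr|\le\frac{A}{\im(z)}+\frac{B}{e^{2\pi\im(z)/|D|}-1}\qquad(\im(z)\ge|D|,\ 0<\eps\le1),
\]
with $A,B>0$ depending only on~$D$. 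The right-hand side is independent of~$\eps$ and tends to~$0$ as $\im(z)\to\infty$, which yields the lemma once one also knows that the inner limit $\lim_{\eps\to0^+}\sigma_\eps(z;a_1,a_2,D)$ exists. For fixed~$z$ of large imaginary part I would check the latter by splitting each $S_\eps(w_m)$ into its constant term $\xi_\eps(|\im(w_m)|;0)$ --- which tends to~$0$ as $\eps\to0^+$, while the accompanying sum $\Gamma(\eps)^{-1}\sum_{m}|a_1+Dm|^{-1-2\eps}$ converges (because $\Gamma(\eps)^{-1}$ has a simple zero at $\eps=0$ and $\sum_{m}|a_1+Dm|^{-1-2\eps}$ a simple pole there) --- and a remainder which, by the estimates established inside the proof of Lemma~\ref{mastecnico}, is exponentially small uniformly in~$\eps$, so that dominated convergence applies.

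The one genuine obstacle is that the sum defining $\sigma_\eps$ is not absolutely convergent uniformly in~$\eps$ --- the weight-$2$ phenomenon --- so limits may not be exchanged naively; this is precisely why the matter is isolated from the case $k>2$. The device that makes it work is exactly the factor $\Gamma(\eps)^{-1}$ furnished by Lemma~\ref{mastecnico}: it cancels the simple pole at $\eps=0$ of the partial zeta sum $\sum_{m}|a_1+Dm|^{-1-2\eps}$, so that the sum of the constant terms of the $S_\eps(w_m)$ stays bounded uniformly in $\eps\in(0,1]$, while every other contribution is exponentially small in $\im(z)$ and thus harmless. Accordingly, the points that demand care when writing this up are the uniformity in $\eps\in(0,1]$ of every estimate and the correct bookkeeping --- first split off the constant terms --- that legitimises passing to the limit $\eps\to0^+$ inside the $m$-sum.
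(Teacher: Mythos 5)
Your proposal is correct and follows essentially the same route as the paper: sum over $n$ first to write $\sigma_\eps$ as $|D|^{-2-2\eps}\sum_m S_\eps(w_m)$, invoke Lemma~\ref{mastecnico} on each translate, and exploit the cancellation between the simple zero of $\Gamma(\eps)^{-1}$ and the simple pole of $\zeta(1+2\eps)$ to get a bound independent of $\eps$ that decays in $\im(z)$. If anything you are slightly more careful than the paper, which only bounds $\limsup_{\eps\to0^+}|\sigma_\eps|$, whereas you also justify the existence of the inner limit by isolating the constant terms $\xi_\eps(|\im(w_m)|;0)$.
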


\begin{proof} Assume $y=\im(z)\geq 1$. We have that

\begin{align*}
\sigma_\eps(z;a_1,a_2,D)&=  \frac{1}{D^{2,\eps}} \sum_{\substack{m\in \Z \\ a_1+Dm \neq 0} } \sum_{n \in \Z }^{ } \frac{1}{\left(z(\frac{a_1}{D} +m) +\frac{a_2}{D}+n  \right)^{2,\eps}}     \\
 &= \frac{1}{D^{2,\eps}} \sum_{\substack{m \in \Z \\  a_1+Dm \neq 0} }^{ } S_\eps \left( z\left(\frac{a_1}{D} +m\right) +\frac{a_2}{D}  \right).
\end{align*}
Define
\[
y_0=\min\left\{\left|\im\bigg(z\left(\frac{a_1}{D} +m\right) \bigg)\right|; \im(z)\geq 1, m\in\Z : a_1 +Dm\neq 0\right\}.
\]
Since $\Z$ is discrete, we have $y_0>0$. Using Lemma \ref{mastecnico} with this choice of $y_0$, we find that for $\eps\leq 1\leq y$ and $m\in\Z$ such that $a_1 +Dm\neq 0$, we have
\[
 S_\eps \left( z\left(\frac{a_1}{D} +m\right) +\frac{a_2}{D}  \right)\ll_{y_0} \frac{1}{\Gamma(\eps)y^{1+2\eps}} \cdot  \frac{1}{\left| \frac{a_1}{D} +m\right|^{1+2\eps} } +e^{-2\pi y \left|\frac{a_1}{D}+m\right| }.
\]
Therefore, we have
\begin{equation*}
\sigma_\eps(z;a_1,a_2,D) \ll_{y_0} \frac{1}{|D|^{2(1+\eps)}} \left( \frac{1}{y^{1+2\eps}}\cdot \frac{1}{\Gamma(\eps)}   \cdot \zeta(1+2\eps)+  \sum_{n\geq 1 }^{ } e^{-\frac{2\pi yn}{|D|} }    \right).
\end{equation*}

Since $\sum_{n\geq 1 }^{ } e^{-\frac{2\pi yn}{|D|} } \ll_{y_0} e^{-\frac{2\pi y}{|D|} }$, we have that 
$$
\limsup_{\eps\rightarrow 0^+}\left|\sigma_\eps(z;a_1,a_2,D)\right| \ll_{y_0} \frac{1}{D^2} \left(\frac{1}{y}+e^{-\frac{2\pi y}{|D|} }\right).
$$
This estimate justifies the claim.
\end{proof}

\begin{lemma}\label{sums} 
Lemma \ref{sumsgeneral} is true for $k=2$. 
\end{lemma}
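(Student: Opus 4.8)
The plan is to deduce Lemma~\ref{sums} (i.e.\ Lemma~\ref{sumsgeneral} in weight~$2$) from the analytic preparations of this paragraph, following the same formal manipulation as in the case $k>2$ but now controlling the interchange of $\lim_{\im(z)\to\infty}$ and summation through the regularisation by~$\eps$. First I would start from the identity~\eqref{peso2}, which gives
\[
\frac{2C_2W(\overline{\chi_2})}{\cond{2}^2}\,\Upsilon_2^{\chi_1,\chi_2}(\gamma,M)
=\lim_{\im(z)\to\infty}\ \Big(\big(\alpha_{M\cond{2}}\,\lim_{\eps\to0^+}G_{2,\eps}^{\chi_1,\chi_2}\big)\big|_2\gamma\Big)(z).
\]
Since the slash operator and the degeneracy operator $\alpha_{M\cond{2}}$ are continuous for the (locally uniform) convergence of $G_{2,\eps}^{\chi_1,\chi_2}$ to its limit as $\eps\to0^+$, I can pull the limit in $\eps$ outside, obtaining
\[
\frac{2C_2W(\overline{\chi_2})}{\cond{2}^2}\,\Upsilon_2^{\chi_1,\chi_2}(\gamma,M)
=\lim_{\im(z)\to\infty}\ \lim_{\eps\to0^+}\ \Big(\big(\alpha_{M\cond{2}}G_{2,\eps}^{\chi_1,\chi_2}\big)\big|_2\gamma\Big)(z).
\]
Writing out the action of $\gamma=\left(\begin{smallmatrix}u&\beta\\v&\delta\end{smallmatrix}\right)$ on the defining double series, and using that for the $\eps$-series absolute convergence holds so the slash action can be carried term by term, I get
\[
\Big(\big(\alpha_{M\cond{2}}G_{2,\eps}^{\chi_1,\chi_2}\big)\big|_2\gamma\Big)(z)
=\sum_{\substack{(m,n)\in\Z^2\\(m,n)\neq(0,0)}}
\frac{\chi_1(m)\overline{\chi_2}(n)}{\big(z(mM\cond{2}u+nv)+mM\cond{2}\beta+n\delta\big)^{2,\eps}}\cdot\,(\text{a }|\cdot|^{-2\eps}\text{ factor}),
\]
where one has to be a little careful to track the $|mz+n|^{2\eps}$ factor through the $\gamma$-substitution, but it is bounded and tends to the correct thing.

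Next I would split this sum according to whether the "coefficient of $z$", namely $mM\cond{2}u+nv$, vanishes or not. The terms with $mM\cond{2}u+nv=0$ are exactly those indexed by the set $C$ of Lemma~\ref{sumsgeneral}; for each such pair the summand is $\chi_1(m)\overline{\chi_2}(n)\,(mM\cond{2}\beta+n\delta)^{-2}|mM\cond{2}\beta+n\delta|^{-2\eps}$, and this contribution converges (the relevant one-variable Dirichlet series is the one computed in Lemma~\ref{valeurs}) and its $\eps\to0^+$, then $\im(z)\to\infty$, limit is precisely $\frac{\cond{2}^2}{2C_2W(\overline{\chi_2})}\sum_{(m,n)\in C}\chi_1(m)\overline{\chi_2}(n)(mM\cond{2}\beta+n\delta)^{-2}$, which is what we want. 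The terms with $mM\cond{2}u+nv\neq0$ are the ones that must be shown to disappear in the iterated limit: here the coefficient $a_1+Dm:=mM\cond{2}u+nv$ (after grouping $m$ and $n$ into arithmetic progressions) is a nonzero integer, and I would organise them into finitely many families of the form $\sigma_\eps(z;a_1,a_2,D)$ handled by Lemma~\ref{tecnico}. Concretely, reducing modulo $\gcd$'s one writes $\{(m,n):mM\cond{2}u+nv\neq0\}$ as a finite union of translated lattices on each of which the expression $z(mM\cond{2}u+nv)+mM\cond{2}\beta+n\delta$ has the shape $z(a_1+Dm')+a_2+Dn'$ with $a_1+Dm'\neq0$; applying Lemma~\ref{tecnico} to each and summing, $\lim_{\im(z)\to\infty}\lim_{\eps\to0^+}$ of this part is $0$.

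Putting the two pieces together yields
\[
\frac{2C_2W(\overline{\chi_2})}{\cond{2}^2}\,\Upsilon_2^{\chi_1,\chi_2}(\gamma,M)
=\sum_{(m,n)\in C}\frac{\chi_1(m)\overline{\chi_2}(n)}{(mM\cond{2}\beta+n\delta)^2},
\]
which is exactly the assertion of Lemma~\ref{sumsgeneral} for $k=2$. The main obstacle, and the step deserving the most care, is the rigorous justification that one may (i) interchange $\lim_{\eps\to0^+}$ with the $\alpha_{M\cond{2}}$ and $|_2\gamma$ operations and with the splitting of the series, and (ii) decompose the "$a_1+Dm\neq0$" part into finitely many pieces each of the precise form required by Lemma~\ref{tecnico}, keeping track of the characters $\chi_1,\overline{\chi_2}$ (which are constant on the relevant residue classes, hence harmless) and of the extra $|\cdot|^{2\eps}$ factors coming from the $G_{2,\eps}$ normalisation versus the $S(z;2+\eps,\eps)$ normalisation of Miyake. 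Everything else is the same bookkeeping already performed for $k>2$ in Lemmas~\ref{sumsgeneral} and~\ref{valeurs}.
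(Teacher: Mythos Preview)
Your proposal is correct and follows essentially the same route as the paper: start from~\eqref{peso2}, split the $\eps$-regularised series into the contribution from $C$ (giving the desired sum) and its complement, and kill the latter via Lemma~\ref{tecnico} after decomposing into finitely many translated lattices. The paper makes two of your sketchy steps explicit---the prefactor is $|vz+\delta|^{2\eps}$ (harmless as $\eps\to0^+$), and the lattice decomposition is achieved by first sorting $(m,n)$ by residues modulo $(\cond{1},\cond{2})$ and then applying the elementary divisors theorem to the matrix $\mathbb{M}=\left(\begin{smallmatrix}M\cond{1}\cond{2}u & M\cond{1}\cond{2}\beta\\ \cond{2}v & \cond{2}\delta\end{smallmatrix}\right)$---but the strategy is identical.
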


\begin{proof} Using Eq.~\eqref{peso2}, we have in particular that
\begin{equation}\label{terminoconstante}
\Upsilon_2^{\chi_1,\chi_2}(\gamma,M)= \frac{\cond{2}^2}{2C_2W(\overline{\chi_2})} \lim_{\im(z) \rightarrow \infty} \lim_{\eps\rightarrow 0^+}\Big( \left(\alpha_{M\cond{2}}G_{2,\eps}^{\chi_1,\chi_2}\right)|_2\gamma\Big)(z).
\end{equation}
For $\eps>0$, let
\begin{equation}\label{T}
T_{\eps}(z) = \sum_{(m,n) \in C }  \frac{\chi_1(m)\overline{\chi_2}(n)}{( mM\cond{2}\beta+n\delta  )^{2,\eps}} 
\end{equation}
and
$$
 R_{\eps}(z) =\sum_{\substack{(m,n) \notin C \\ (m,n) \neq (0,0)}}  \frac{\chi_1(m)\overline{\chi_2}(n)}{( z(mM\cond{2}u+nv)+mM\cond{2}\beta+n\delta  )^{2,\eps}},
$$
where, as in Lemma~\ref{sumsgeneral}, 
$$C=\left\{(m,n)\in \Z^2\backslash \{(0,0)\} : mM\cond{2}u+nv=0\right\}.$$ 
Then, we have 
\begin{equation*}
\left(\alpha_{M\cond{2}}G_{2,\eps}^{\chi_1,\chi_2}\right)\vert_2\gamma(z) = |vz+\delta|^\eps \sum_{\substack{(m,n)\in\Z^2 \\ (m,n)\neq (0,0)} }^{ } \frac{\chi_1(m)\overline{\chi_2}(n)}{(mM\cond{2}(uz+\beta) +n(vz+\delta))^{2,\eps}}
\end{equation*}
and therefore 
$$
\lim_{\eps \rightarrow 0^+}\left(\alpha_{M\cond{2}}G_{2,\eps}^{\chi_1,\chi_2}\right)\vert_2\gamma(z)= \lim_{\eps\rightarrow 0^+} \left(T_\eps(z) + R_\eps(z)\right).
$$
Since the parameters appearing in the sum defining $T_\eps$ are linked by a linear relation, the series obtained by setting $\eps=0$ in \eqref{T} is absolutely convergent. Hence, we have that

$$\lim_{\eps \rightarrow 0^+} T_\eps(z)=  \sum_{(m,n) \in C }  \frac{\chi_1(m)\overline{\chi_2}(n)}{( mM\cond{2}\beta+n\delta  )^{2}}.$$
In particular, this limit is independent of $z$. Hence, in light of Eq.~\eqref{terminoconstante}, in order to finish the proof we need to show that 

\begin{equation}\label{final}
\lim_{y \rightarrow \infty} \lim_{\eps\rightarrow 0^+} R_\eps(z)=0.
\end{equation}
We have that
\begin{equation}\label{erre}
 R_{\eps}(z) = \sum_{a=0}^{\cond{1}-1}\sum_{b=0}^{\cond{2}-1}\chi_1(a)\overline{\chi_2}(b) \sum_{\substack{(c,d)\in C_{a,b}\\c \neq 0 }}^{ }\frac{1}{( cz+d )^{2,\eps}},
\end{equation}
where $$C_{a,b}=\left\{ (mM\cond{2}u+nv,mM\cond{2}\beta+n\delta ): m \equiv a \mod \cond{1}, n \equiv b \mod \cond{2}\right\}.$$

Now we proceed to split each of the sums in \eqref{erre} indexed by $C_{a,b}$ in a finite number of sums of the type handled by Lemma~\ref{tecnico}. Let $$\mathbb{M}=\left(\begin{array}{cc}
M\cond{1}\cond{2}u & M\cond{1}\cond{2}\beta \\
\cond{2}v & \cond{2}\delta
\end{array}\right), \quad \theta^{a,b}=\left(aM\cond{2}u+bv,aM\cond{2}\beta +b\delta \right).$$  
Then, $C_{a,b} = \theta^{a,b} +\Z^2\cdot\mathbb{M}$ (here, we represent the elements of $\Z^2$ as row vectors). Let  $D:=\det \mathbb{M}=M\cond{1}\cond{2}^2$. By the elementary divisors theorem, we have that 
$ D\Z \times D\Z \subset\Z^2\cdot\mathbb{M} $ is a subgroup of index~$D$. Let $\{ r_1,r_2,\ldots,r_D\}$ be a system of representatives of the quotient $\Z^2\cdot\mathbb{M}/D\Z \times D\Z$. Then, in the notation of Lemma~\ref{tecnico}, we have that

$$
R_{\eps}(z) =\sum_{a=0}^{\cond{1}-1}\sum_{b=0}^{\cond{2}-1} \chi_1(a)\overline{\chi_2}(b) \sum_{i=1}^D  \sigma_\eps\left(z;\theta_1^{a,b}+r_{i,1},\theta_2^{a,b}+r_{i,2},D\right),
$$
where, for any vector $w\in \R^2$ we write $w=(w_1,w_2)$. Then, using Lemma~\ref{tecnico}, we deduce the truth of Eq. \eqref{final}.
\end{proof}

\section{Adelization of modular forms and Hecke operators}\label{s:background}

In this short section we briefly introduce some useful notation and make explicit our normalizations for modular forms and Hecke operators in the adelic setting. 

For simplicity, we set, in this section, $\G=\GL_2$ considered as an algebraic group over~$\Q$. We denote by~$\A$ the ring of ad\`eles of~$\Q$. Let 
$$\G(\R)^+=\{\gamma\in\G(\R)\colon \det\gamma>0\}.$$
For each prime number~$p$, we denote by $\iota_p\colon\G(\Q)\rightarrow\G(\A)$ the map induced by the ring homomorphism~$\Q\hookrightarrow\Q_p\rightarrow\A$. We define similarly $\iota_\infty\colon\G(\R)\rightarrow\G(\A)$ using the inclusion~$\R\hookrightarrow\A$. We then embed $\G(\Q)$ in~$\G(\A)$ diagonally (that is, using $\prod_p\iota_p\times\iota_\infty$)  and we embed $\G(\R)^+$  at infinity (that is, using~$\iota_\infty$).

Let $N\geq1$ be a positive integer. For every prime number~$p$ set
\[
K_p(N)=\left\{\begin{pmatrix}
               a & b \\
	      c & d \\
              \end{pmatrix}\in\G(\Z_p)\colon c\equiv0\pmod{N\Z_p}
\right\}
\]
and define $K_0(N)=\prod_{p}K_p(N)$ as a subgroup of~$\G(\A_\f)$ where $\A_\f$ denotes the finite ad\`eles of~$\Q$. The strong approximation theorem (\cite[Thm.~3.3.1]{Bum97} for~$\G$ then implies that
\begin{equation}\label{eq:SAT}
\G(\A)=\G(\Q)\G(\R)^+K_0(N).
\end{equation}

We denote by~$\omega$ the adelization (\emph{loc. cit.} Prop.~3.1.2) of a given Dirichlet character~$\chi$ of modulus~$N$, and define the group homomorphism
\[
 \begin{array}{cccl}
\lambda\colon&  K_0(N) & \longrightarrow & \C^\times \\
& \left(\begin{pmatrix}
a_p & b_p \\
c_p & d_p \\
\end{pmatrix}\right)_p &
\longmapsto & 
\displaystyle{\prod_{p\mid N}\omega_p(d_p)}
\end{array}.
\]

Let $p$ be a prime divisor of~$N$. For every integer $n\in\{0,\ldots,p-1\}$, define 
\[
\xi_n=\begin{pmatrix}
p & n \\
0 & 1 \\
\end{pmatrix}.
\]
Let  $k_0\in K_0(N)$. Denote by $\begin{pmatrix}
a & b \\
c & d \\
\end{pmatrix}\in K_p(N)$ the $p$-th component of~$k_0$. Let $n\in\{0,\ldots,p-1\}$ be an integer. Since $p\mid N$, we have that $cn+d\in\Z_p^\times$ and we define $m$ to be the unique integer in~$\{0,\ldots,p-1\}$ such that
\[
(cn+d)m\equiv an+b\pmod{p\Z_p}.
\]
Let $k_0'=\iota_p(\xi_m)^{-1}k_0\iota_p(\xi_n)$.  It  follows from the following matrix identity in~$\G(\Q_p)$
$$
\xi_m^{-1}\begin{pmatrix}
a & b \\
c & d \\
\end{pmatrix}\xi_n=\begin{pmatrix}
a-mc & \frac{an+b-m(cn+d)}{p} \\
cp & cn+d
\end{pmatrix}
$$
that 
\begin{equation}\label{eq:matrix_id}
k_0'\in K_0(N)\quad\text{and}\quad \lambda(k_0')=\lambda(k_0).
\end{equation}

Let $g \in \G(\A)$, that we decompose as 
$$
g=\gamma g_\infty k_0, \quad \gamma \in \G(\Q), \quad g_\infty \in \G(\R)^+, \quad k_0\in K_0(N)
$$
using Eq.~(\ref{eq:SAT}). We then check place by place that the following equality holds (see~\emph{loc. cit.}, p.~345)
\begin{equation}\label{eq:commute}
g\iota_p(\xi_n)=\left(\gamma\xi_m\right)\left(\xi_{m,\infty}^{-1}g_\infty\right)\left(\xi_{m,\f}^{-1}\iota_p(\xi_m)k_0'\right)\in\G(\Q)\G(\R)^+K_0(N)
\end{equation}
where $n\in\{0,\ldots,p-1\}$ and $m\in\{0,\ldots,p-1\}$, $k_0'\in K_0(N)$ are defined above. Here, $\xi_{m,\f}$ and $\xi_{m,\infty}$ denote the finite and the infinite components of~$\xi_m\in\G(\A)$ respectively.

Let $k\geq2$ be an integer. Denote by $S_k\left(N,\chi\right)$ the space of cuspidal modular forms of weight $k$, level~$N$ and Nebentypus character~$\chi$. To a mo\-dular form $F \in S_k\left(N,\chi\right)$, we attach 
$$
\phi_F\colon \G(\A) \rightarrow \C, \quad \phi_F(g)=F(g_\infty\cdot i)j(g_\infty, i)^{-k}\lambda(k_0),\quad g=\gamma g_\infty k_0.
$$ 
Here, for $g_\infty = \begin{pmatrix}
a & b \\
c &d 
\end{pmatrix}\in \G(\R)^+$, we have $j(g_\infty,z)=(cz+d)\det g_\infty^{-1/2}$. Since 
$$
\G(\Q)\cap\G(\R)^+ K_0(N)=\Gamma_0(N)=\left\{\begin{pmatrix}a&b\\c&d\\ \end{pmatrix}\in\SL_2(\Z)\colon c\equiv0\pmod{N}\right\}
$$ 
and for every $\gamma=\begin{pmatrix}a&b\\c&d\\ \end{pmatrix}\in\Gamma_0(N)$, we have $\lambda(\gamma)=\chi(d)^{-1}$ (as $\omega$ is trivial on~$\Q^\times$), the function~$\phi_F$ is a well-defined automorphic form (\emph{loc. cit.}, \S3.6). Define $\pi_F$ to be the linear span of right translates of~$\phi_F$ under~$\G(\A)$ and assume that $F$ is an eigenfunction for the Hecke operators away from~$N$. Then $\pi_F$ decomposes as a restricted tensor product~$\bigotimes'\pi_{F,v}$ where $v$ runs over the places of~$\Q$ and $\pi_{F,v}$ is an admissible irreducible representation of~$\G({\Q_v})$ (\emph{loc. cit.}, \S3.3). We now define the $p$-th Hecke operator in this adelic setting as follows (note the factor $1/\sqrt{p}$)
\begin{equation}\label{eq:Hecke_op}
\widetilde{U_p}=\frac{1}{\sqrt{p}}\sum_{n=0}^{p-1}\pi_{F,p}(\xi_n).
\end{equation}
The following result will be used in the proof of Theorems~\ref{main} and~\ref{main2}.
\begin{lemma}\label{lem:eigenvalue}
Let $U_p$ denote the $p$-th Hecke operator acting on~$S_k(N,\chi)$. Then, we have
\[
p^{\frac{k-1}{2}}\widetilde{U_p}\phi_F=\phi_{U_pF}.
\]
\end{lemma}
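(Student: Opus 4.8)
\textbf{Proof plan for Lemma~\ref{lem:eigenvalue}.}
The plan is to unwind the definition of $\phi_F$ and of $\widetilde{U_p}$ and to compare, place by place, the adelic expression for $\widetilde{U_p}\phi_F$ with the classical action of $U_p$ on $F$. First I would fix $g\in\G(\A)$ and a decomposition $g=\gamma g_\infty k_0$ with $\gamma\in\G(\Q)$, $g_\infty\in\G(\R)^+$, $k_0\in K_0(N)$ as in~\eqref{eq:SAT}. By definition $\widetilde{U_p}\phi_F(g)=\frac{1}{\sqrt p}\sum_{n=0}^{p-1}\phi_F(g\iota_p(\xi_n))$ (since $\pi_{F,p}(\xi_n)$ acts by right translation by $\iota_p(\xi_n)$ on the automorphic form $\phi_F$). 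The key input is the identity~\eqref{eq:commute}, which rewrites $g\iota_p(\xi_n)$ as $(\gamma\xi_m)(\xi_{m,\infty}^{-1}g_\infty)(\xi_{m,\f}^{-1}\iota_p(\xi_m)k_0')$, a decomposition of the required shape because of~\eqref{eq:matrix_id}. Applying the definition of $\phi_F$ to this decomposition and using $\lambda(\xi_{m,\f}^{-1}\iota_p(\xi_m)k_0')=\lambda(k_0')=\lambda(k_0)$ (which follows from~\eqref{eq:matrix_id} together with the fact that the finite component $\xi_{m,\f}^{-1}\iota_p(\xi_m)$ is trivial away from $p$ and at $p$ lies in $\G(\Z_p)$ with lower-left entry $0$, hence is killed by $\lambda$ when $p\mid N$, or — if $p\nmid N$ — contributes nothing since $\lambda$ only sees primes dividing $N$), we get
\[
\phi_F(g\iota_p(\xi_n))=F\big((\xi_{m,\infty}^{-1}g_\infty)\cdot i\big)\,j(\xi_{m,\infty}^{-1}g_\infty, i)^{-k}\,\lambda(k_0).
\]

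Next I would convert the automorphy factor and the argument into the classical slash action. Writing $\xi_m^{-1}=\begin{pmatrix}1/p & -n/p\\ 0 & 1\end{pmatrix}$ up to the determinant normalization in $j$, one has, by the cocycle relation for $j$ and the $\GL_2(\R)^+$-invariance built into $j(\cdot,z)=(cz+d)\det(\cdot)^{-1/2}$, that $F(\xi_{m,\infty}^{-1}g_\infty\cdot i)\,j(\xi_{m,\infty}^{-1}g_\infty,i)^{-k}=(F\vert_k\xi_m^{-1})(g_\infty\cdot i)\,j(g_\infty,i)^{-k}$, where $\vert_k$ is the weight-$k$ slash operator with the standard $\det^{k/2}$ (or $\det^{k-1}$, depending on convention — I would pin this down to match the normalization of $U_p$ on $S_k(N,\chi)$). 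Since as $n$ runs over $\{0,\dots,p-1\}$ the induced $m$ also runs over $\{0,\dots,p-1\}$ (the map $n\mapsto m$ is a bijection, by the congruence $(cn+d)m\equiv an+b\pmod{p\Z_p}$ with $cn+d\in\Z_p^\times$), summing over $n$ is the same as summing over $m$, and the classical $U_p$ on $S_k(N,\chi)$ is, in these conventions, $U_pF=p^{k/2-1}\sum_{m=0}^{p-1}F\vert_k\xi_m^{-1}$ (up to adjusting for the scalar coming from $j$). Collecting the powers of $p$: the $\frac{1}{\sqrt p}$ from $\widetilde{U_p}$, the factor $p^{(k-1)/2}$ on the left-hand side of the claimed identity, and whatever $p$-power the slash normalization contributes, must assemble into exactly the normalizing constant in the classical $U_p$; this bookkeeping is what forces the factor $1/\sqrt p$ in~\eqref{eq:Hecke_op} and the $p^{(k-1)/2}$ in the statement. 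After this one reads off $p^{(k-1)/2}\widetilde{U_p}\phi_F(g)=\big(\sum_m (F\vert_k\xi_m^{-1})\cdot(\text{const})\big)(g_\infty\cdot i)\,j(g_\infty,i)^{-k}\lambda(k_0)=\phi_{U_pF}(g)$.

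The main obstacle, as usual in this kind of statement, is not any single hard idea but the careful reconciliation of three different normalizations: the automorphy factor $j(g_\infty,z)=(cz+d)\det g_\infty^{-1/2}$ used in defining $\phi_F$, the normalization of the classical Hecke operator $U_p$ on $S_k(N,\chi)$ (whether $U_pF(z)=\sum_n F((z+n)/p)$ or carries an extra $p^{k-1}$ or $p^{k/2-1}$), and the $\frac{1}{\sqrt p}$ in~\eqref{eq:Hecke_op}. One must also check that the Nebentypus factor $\lambda$ is genuinely unchanged under the substitution $k_0\mapsto \xi_{m,\f}^{-1}\iota_p(\xi_m)k_0'$, which is exactly~\eqref{eq:matrix_id} at $p$ together with the triviality of the substitution away from $p$. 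Once the normalizations are fixed consistently — I would fix them so that $\pi_F$ is unitary, i.e. use $\det^{1/2}$ in $j$ and the correspondingly unitarily normalized $U_p$ — the computation is a direct chain of substitutions with no further subtlety, and the factor $p^{(k-1)/2}$ is precisely what interpolates between the unitary normalization on the automorphic side and the arithmetic normalization of $U_p$ on $S_k(N,\chi)$ used elsewhere in the paper.
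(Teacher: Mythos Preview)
Your proposal is correct and follows essentially the same route as the paper: apply~\eqref{eq:commute} to each term $\phi_F(g\iota_p(\xi_n))$, use~\eqref{eq:matrix_id} to see $\lambda$ is unchanged, invoke the bijection $n\mapsto m$, and reduce to the classical formula for $U_p$. The only difference is presentational: the paper carries out the $p$-power bookkeeping explicitly---computing $F(\xi_{m,\infty}^{-1}g_\infty\cdot i)\,j(\xi_{m,\infty}^{-1}g_\infty,i)^{-k}=p^{-k/2}F\big((g_\infty\cdot i-m)/p\big)j(g_\infty,i)^{-k}$ and then matching against the normalization $U_pF(z)=\frac{1}{p}\sum_{m}F\big((z-m)/p\big)$---whereas you leave this as a normalization check; also note that in the paper's setup $p\mid N$ throughout, so your side remark about $p\nmid N$ is superfluous.
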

\begin{proof}
Let $g=\gamma g_\infty k_0\in\G(\A)$. Then, in the notation of Eq.~\eqref{eq:commute}, we have (using the fact that the map $n\mapsto m$ is a bijection of~$\{0,\ldots,p-1\}$)
\begin{align*}
\widetilde{U_p}\phi_F(g) &= \frac{1}{\sqrt{p}}\sum_{n=0}^{p-1} \phi_F\left(g\iota_p(\xi_n)\right) \\
& = \frac{1}{\sqrt{p}}\sum_{m=0}^{p-1} \phi_F\left(\left(\gamma\xi_m\right)\left(\xi_{m,\infty}^{-1}g_\infty\right)\left(\xi_{m,\f}^{-1}\iota_p(\xi_m)k_0'\right)\right) \\
& = \frac{1}{\sqrt{p}}\sum_{m=0}^{p-1} F\left(\left(\xi_{m,\infty}^{-1}g_\infty\right)\cdot i\right) j\left(\xi_{m,\infty}^{-1}g_\infty, i\right)^{-k} \lambda\left(\xi_{m,\f}^{-1}\iota_p(\xi_m)k_0'\right). \\
\end{align*}
Besides, from the definition of~~$\xi_m$ and Eq.~(\ref{eq:matrix_id}), we have 
$$
\lambda\left(\xi_{m,\f}^{-1}\iota_p(\xi_m)k_0'\right)=\lambda(k_0')=\lambda(k_0),
$$
and from the automorphy relation for~$F$, we have
$$
F\left(\left(\xi_{m,\infty}^{-1}g_\infty\right)\cdot i\right) j\left(\xi_{m,\infty}^{-1}g_\infty, i\right)^{-k}=p^{-k/2}F\left(\frac{g_\infty \cdot i-m}{p}\right)j(g_\infty,i)^{-k}.
$$ 
We conclude that 
\[
\widetilde{U_p}\phi_F(g) = \frac{1}{p^{(k+1)/2}}\sum_{m=0}^{p-1} F\left(\frac{g_\infty \cdot i-m}{p}\right)j(g_\infty,i)^{-k} \lambda(k_0).
\]
Hence, the desired identity follows from the formula
\[
U_pF(z)=\frac{1}{p}\sum_{m=0}^{p-1}F\left(\frac{z+m}{p}\right)
=\frac{1}{p}\sum_{m=0}^{p-1}F\left(\frac{z-m}{p}\right),\quad z\in\h.
\]
\end{proof}

\section{Proofs of the main results}\label{s:proofs}

\subsection{Proof of Theorem \ref{main}}

Let
\(\nu_1,\nu_2\colon\GQ\rightarrow\Ffbar^\times\) be characters such that $\rho=\nu_1\oplus\nu_2$ defines an odd (semi-simple) Galois representation of Serre type~\((N,k,\eps)\). Assume throughout that~$l>k+1$. Each of the characters $\nu_i$ ($i=1,2$) can be decomposed as $\nu_i=\eps_i\chi_l^{a_i}$ where $\eps_i$ is unramified at~$l$, $a_i$ is a non-negative integer and~$\chi_l$ denotes the mod~$l$ cyclotomic character. Without loss of generality, we may further assume that \(0\le a_1\le a_2\le l-2\). According to Serre's definition of the weight~$k$ (see~\cite[(2.3.2)]{Ser87}), we then have~:
\[
k=\left\{
\begin{array}{ll}
1+l a_1+a_2 &\text{if $(a_1,a_2)\not=(0,0)$} \\
l & \text{if $(a_1,a_2)=(0,0)$}
\end{array}
\right..
\]
Since we have assumed $l>k+1$, it follows that $(a_1,a_2)=(0,k-1)$. This proves the first part of Theorem~\ref{main}. 

Let us then  prove the equivalence. Denote by $\cond{1}$ and $\cond{2}$ the conductors of $\eps_1$ and $\eps_2$ respectively.  We have the Serre parameters $\eps=\eps_1\eps_2$ and $N=\cond{1}\cond{2}$. If $(N,k)=(1,2)$, then both $\eps_1$ and $\eps_2$ are trivial and therefore, in the notation of the theorem, we have
\[
B_{2,\eta}=B_2\pmod{l}\quad\text{and}\quad B_2=\frac{1}{6}\not\equiv 0\pmod{l}.
\]
On the other hand, there is no non-zero cuspidal eigenform of weight~$2$ and level~$1$ over~$\Ffbar$ for~$l\ge5$. Hence, the desired equivalence is established in this case.

From now on, let us then assume that either $N>1$ or $k>2$. Fix a place $\place$ of $\Qbar$ above~$l$ and denote by $\chi_1$ and $\chi_2$ the multiplicative lifts with respect to~$\place$ (in the sense of  paragraph~\ref{ss:Bernoulli_mod_l}) of $\eps_1$ and $\eps_2$ respectively. We view $\chi=\chi_1\chi_2$ as a Dirichlet character modulo~$N$. The Eisenstein series $E_k^{\chi_1,\chi_2}$ introduced in paragraph~\ref{ss:setting} (which is well-defined as we have $(N,k)\not=(1,2)$) has weight~$k$, level~$N$ and Nebentypus character~$\chi$. Moreover, it is a normalized eigenform for the full Hecke algebra at level~$N$. In particular, if we write
\[
E_k^{\chi_1,\chi_2}(z)=\sum_{n\geq0}a_n\left(E_k^{\chi_1,\chi_2}\right)e^{2i\pi zn}, \quad (z\in\h)
\]
then its eigenvalue for the action of the Hecke operator at an arbitrary prime~$p$  is given by
\[
a_p\left(E_k^{\chi_1,\chi_2}\right)=\chi_1(p)+\chi_2(p)p^{k-1}.
\]
By assumption, there exists an eigenform $f$ of type~$(N,k,\eps)$ over~$\Ffbar$ such that, in the notation of the Introduction, we have $\rho_f\simeq\rho$. Let us write $f=\sum_{n\geq1}a_nq^n$ as in~\cite[D\'ef. p.~193]{Ser87}. In other words, there exists $F=\sum_{n\geq1}A_nq^n$ a weight-$k$ cuspidal form of level~$N$ and Nebentypus character~$\chi$ such that~$A_n\in\Zbar_\place$ and
\begin{equation}\label{eq:reduction}
\nu_\place(A_n)=a_n,\quad\text{for any integer~$n\geq1$},
\end{equation}
in the notation of paragraph~\ref{ss:Bernoulli_mod_l}. By Deligne-Serre lifting lemma (\cite[Lem.~6.11]{DeSe74}), one may further assume that $F$ is a normalized eigenform for all the Hecke operators at level~$N$. Denote by $E$ the number field generated by the Hecke eigenvalues of $F$ and by $\lambda$ the prime ideal above~$l$ in~$E$ induced by~$\place$. Let $E_{\lambda}$ be the completion of~$E$ at~$\lambda$. Thanks to the isomorphism $\rho\simeq\rho_f$ and~\eqref{eq:reduction}, the semisimplification of the reduction modulo~$\lambda$ of the $\lambda$-adic representation of~$F$
\[
\rho_{F,\lambda}\colon\GQ\longrightarrow\GL_2(E_{\lambda}),
\]
is isomorphic to $\rho$.   Since $F$ has level~$N$ and $\rho$ conductor~$N$ away from~$l$, the form~$F$ is actually a newform. For every prime~$p\nmid Nl$, we have
\[
\nu_\place(A_p)=\eps_1(p)+\eps_2(p)p^{k-1},
\]
where, $\eps_i(p)=\eps_i(\Frob_p)$ if $\eps_i$ is unramified at~$p$ and $\eps_i(p)=0$ otherwise, for $i=1,2$. The next step is to extend these congruences to arbitrary primes~$p\not=l$, as stated in the following key result. (Note that only the case $N>1$ requires a proof.) 
\begin{prop}\label{prop:congruences}
In this notation, we have 
\[
\nu_\place(A_p)=\eps_1(p)+\eps_2(p)p^{k-1},\quad\text{for every prime~$p\not=l$.}
\]
\end{prop}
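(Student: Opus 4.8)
The plan is to handle the only unresolved case, a prime $p\mid N$ (for $p\nmid Nl$ the congruence is already in hand, and $p=l$ is excluded), via the local component $\pi_{F,p}$ of the newform $F$ together with a conductor argument. Since $p\mid N=\cond{1}\cond{2}$, the characters $\eps_1,\eps_2$ are not both unramified at $p$, so I would distinguish two cases. If both are ramified at $p$, then $\eps_1(p)=\eps_2(p)=0$ and the assertion reads $\nu_\place(A_p)=0$. If exactly one of them, say $\eps_1$, is unramified at $p$, then $\eps_2(p)p^{k-1}=0$ and the assertion reads $\nu_\place(A_p)=\eps_1(p)=\nu_1(\Frob_p)$.

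Two inputs are needed. First, local--global compatibility for $F$ (Carayol, \cite[Thm.~(A)]{Car86}): the Weil--Deligne representation attached to $\rho_{F,\lambda}$ at $p$ corresponds under local Langlands to $\pi_{F,p}$, and since $F$ is a newform of level~$N$, the conductor exponent of $\rho_{F,\lambda}$ at $p$ is $v_p(N)$, which is also the conductor exponent at $p$ of $\rho=\nu_1\oplus\nu_2$. Since the conductor exponent at the prime $p\neq l$ can only decrease under reduction modulo~$\lambda$ and under semisimplification, it is in fact \emph{preserved}: that of $\rho|_{D_p}$, the semisimplified reduction of $\rho_{F,\lambda}|_{D_p}$, again equals $v_p(N)$, and no ramification is lost. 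Second, at $p$ the function $\phi_F$ realizes the new vector of $\pi_{F,p}$ (the vector fixed by $K_p(N)$ up to the Nebentypus action), so by the theory of new vectors for $\GL_2(\Q_p)$ (Casselman) together with Lemma~\ref{lem:eigenvalue}, the quantity $p^{-(k-1)/2}A_p$ is the scalar by which $\widetilde{U_p}$ acts on it; this vanishes unless $\pi_{F,p}$ is an unramified twist of the Steinberg representation or a ramified principal series $\pi(\mu_1,\mu_2)$ with $\mu_1$ unramified and $\mu_2$ of conductor $p^{v_p(N)}$, in which last case it equals $\mu_1(\varpi_p)$ in the unitary normalization.

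The first case then falls out: if $\nu_\place(A_p)\neq0$, then $\pi_{F,p}$ is one of the two non-vanishing types, and in both $\rho_{F,\lambda}|_{D_p}$ acquires an unramified constituent (it is unramified for an unramified twist of Steinberg, and equals $\widetilde{\mu_1}\oplus\widetilde{\mu_2}$ with $\widetilde{\mu_1}$ unramified for the principal series); hence so does its reduction $\rho|_{D_p}=\nu_1|_{D_p}\oplus\nu_2|_{D_p}$, contradicting that both $\nu_i$ are ramified at $p$. For the second case I would first rule out $A_p\equiv0$: the remaining possibilities for $\pi_{F,p}$ are a ramified twist of Steinberg, a principal series with both characters ramified, or a supercuspidal, and in each of these the reduction of $\rho_{F,\lambda}|_{D_p}$ has no unramified constituent. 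Indeed, for a ramified twist of Steinberg the two constituents differ by the (unramified) cyclotomic character, so either both or neither is unramified, and ``both'' is excluded since $p\mid N$. For a principal series with both $\mu_i$ ramified, and for a dihedral supercuspidal $\mathrm{Ind}_{E}^{\Q_p}\theta$ (necessarily with $\theta$ ramified), the emergence of an unramified constituent upon reduction would strictly lower the conductor exponent at $p$, against conductor preservation --- for the supercuspidal one invokes the conductor formula $v_p(\mathrm{cond}\,\mathrm{Ind}_E^{\Q_p}\theta)=v_p(d_{E/\Q_p})+v_p(N_{E/\Q_p}(\mathrm{cond}\,\theta))$ with the second term $\geq1$. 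Finally the primitive supercuspidals, possible only at $p=2$, are excluded because $\rho$ is reducible (so $\overline{\rho_{F,\lambda}}$ is globally reducible, hence $\overline{\rho_{F,\lambda}}|_{D_2}$ is reducible), while the hypothesis $l\geq5$ forces the projective image of $\rho_{F,\lambda}|_{D_2}$, which is $A_4$ or $S_4$, to survive reduction, and neither $A_4$ nor $S_4$ lies inside the image of a Borel subgroup. Hence $A_p\not\equiv0$, and among the two non-vanishing types an unramified twist of Steinberg is now impossible, since it would make $\rho|_{D_p}$ unramified while $\nu_2$ is ramified at $p$. Therefore $\pi_{F,p}=\pi(\mu_1,\mu_2)$ with $\mu_1$ unramified, so $\rho_{F,\lambda}|_{D_p}=\widetilde{\mu_1}\oplus\widetilde{\mu_2}$ with $\widetilde{\mu_1}$ unramified and $\widetilde{\mu_2}$ ramified; its reduction $\overline{\widetilde{\mu_1}}\oplus\overline{\widetilde{\mu_2}}$ still has $\overline{\widetilde{\mu_2}}$ ramified (otherwise $\rho|_{D_p}$ would be unramified, again against $\nu_2$), so $\nu_1|_{D_p}=\overline{\widetilde{\mu_1}}$; and Lemma~\ref{lem:eigenvalue}, the new-vector eigenvalue, and the local Langlands normalization then give $\nu_\place(A_p)=\overline{\widetilde{\mu_1}}(\Frob_p)=\nu_1(\Frob_p)=\eps_1(p)$.

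The argument is conceptually light --- the classification of $\pi_{F,p}$ and conductor preservation do the work --- so the real obstacle, I expect, is the normalization bookkeeping in the last step: one must track the local Langlands conventions (arithmetic versus geometric Frobenius, the chosen uniformizer), the twist relating the central character of $\pi_{F,p}$ to $\det\rho_{F,\lambda}$, and the power of $p$ coming from Lemma~\ref{lem:eigenvalue}, so as to land precisely on $\nu_1(\Frob_p)$ with no stray unramified twist. As a check, this must be compatible with the already-known case $p\nmid Nl$ and with the $p$-th Hecke eigenvalue of the Eisenstein series $E_k^{\chi_1,\chi_2}$. A minor additional subtlety is the supercuspidal sub-case at $p=2$, which is the only point where the hypothesis $l\geq5$ is used.
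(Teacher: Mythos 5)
Your argument is correct and rests on the same pillars as the paper's proof --- Carayol's local--global compatibility, the classification of local components of newforms, and the new-vector Hecke eigenvalue computed via Lemma~\ref{lem:eigenvalue} --- but you eliminate the non-principal-series possibilities by a genuinely different mechanism. The paper splits according to $\ord{p}(N)$ versus $\ord{p}(\cond{})$, where $\cond{}$ is the conductor of the Nebentypus, and exploits the special shape $N=\cond{1}\cond{2}$ of the Serre conductor of a reducible representation: since $\ord{p}(\cond{})<\ord{p}(N)$ forces both $\eps_1$ and $\eps_2$ to be ramified at~$p$, the case $\ord{p}(N)=1$, $p\nmid\cond{}$ simply cannot occur, and in the case $\ord{p}(N)\ge2$, $\ord{p}(\cond{})<\ord{p}(N)$ the classical vanishing $A_p=0$ of Miyake's Theorem~4.6.17 settles the congruence with both sides zero; only when $\ord{p}(N)=\ord{p}(\cond{})$ does the automorphic machinery enter, and there $|A_p|=p^{(k-1)/2}\neq0$ immediately forces a principal series. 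You instead split according to the ramification of $\eps_1,\eps_2$ and sweep through the full classification of $\pi_{F,p}$, discarding ramified twists of Steinberg, principal series with both characters ramified, and supercuspidals via conductor preservation (the conductor exponents of $\rho_{F,\lambda}$ and of its reduction $\rho$ at $p$ both equal $v_p(N)$, so no ramification can be lost), together with a separate group-theoretic argument for the exceptional supercuspidals at $p=2$. This makes the proof independent of the identity $N=\cond{1}\cond{2}$, at the cost of extra inputs: the inequality $a_p(\overline{\rho}_{F,\lambda})\le a_p(\rho_{F,\lambda})$ for $p\neq l$, the strictness of the conductor drop for supercuspidals (which deserves one more line: an irreducible ramified two-dimensional representation has no inertia invariants, so its tame conductor term is~$2$ and falls to at most~$1$ once an unramified constituent appears), and the $A_4$/$S_4$ discussion at $p=2$ --- all of which the paper's elementary conductor arithmetic avoids. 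Your final normalization, matching the unramified constituent $\overline{\mu_1}\cyclomod^{(k-1)/2}$ with whichever of $\eps_1,\eps_2\cyclomod^{k-1}$ is unramified at~$p$, agrees with the paper's Eq.~\eqref{eq:eigenvalue} and the displayed set equality, and does land on $\nu_\place(A_p)=\eps_1(p)+\eps_2(p)p^{k-1}$.
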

\begin{proof}We have seen that the equality holds for primes not dividing~$Nl$. Let~$p$ be a prime dividing~$N$ (note that, by definition, $N$ is coprime to~$l$ and hence $p\not=l$). We  denote by~$\cond{}$ the conductor of~$\chi$. We shall split the proof into  three cases :
\begin{enumerate}
\item\label{item:case_1} $\ord{p}(N)=1$ and $\ord{p}(\cond{})=0$;
\item $\ord{p}(N)\ge2$ and $\ord{p}(\cond{})<\ord{p}(N)$;
\item $\ord{p}(N)=\ord{p}(\cond{})$.
\end{enumerate}
To deal with the first two cases, we first observe that if  $\ord{p}(\cond{})<\ord{p}(N)$, then both characters $\chi_1$ and $\chi_2$ are ramified at~$p$. Indeed, since $\ord{p}(N)>0$ and $N=\cond{1}\cond{2}$, at least one of the two characters $\chi_1$ and $\chi_2$ is ramified at~$p$. On the other hand, if the other one is unramified at~$p$ then, we have
\[
\ord{p}(\cond{})=\ord{p}(\cond{1})+\ord{p}(\cond{2})=\ord{p}(N),
\]
obtaining a contradiction.

In the first case, using this observation, we obtain $$1=\ord{p}(N)=\ord{p}(\cond{1})+\ord{p}(\cond{2})\ge2$$ and a contradiction. Case~(\ref{item:case_1}) therefore does not occur.

In the second case, we have that $A_p=0$ (\cite[Thm.~4.6.17]{Miy06}) and by the above observation, both $\chi_1,\chi_2$ (and hence $\eps_1$ and $\eps_2$) are ramified at~$p$. We therefore have the desired equality as both sides are zero.

It therefore remains to deal with the last case. Let $\phi_F$ be the adelization of~$F$ as defined in Section~\ref{s:background}. Denote by~$\pi_F$ the corresponding automorphic representation. Since $F$ is $p$-new, then $\phi_F$ is a so-called new-vector for~$\pi_{F,p}$ (in the sense of~\cite[Thm.~2.2]{LoWe12}). The endomorphism~$\widetilde{U_p}$ defined in Eq.~(\ref{eq:Hecke_op}) acts on  the (one-dimensional) vector space of new-vectors of~$\pi_{F,p}$ by multiplication by an  eigenvalue that we denote by $\lambda(\pi_{F,p})$. It then follows from Lemma~\ref{lem:eigenvalue} that we have  
$$
\lambda(\pi_{F,p})=A_p/p^{(k-1)/2}.
$$

Using the assumption $\ord{p}(N)=\ord{p}(\cond{})$, we have that $\lambda(\pi_{F,p})$ has absolute value~$1$ and therefore is~$\not=0$ (\cite[Thm.~4.6.17]{Miy06}). On the other hand, we see from the classification of irreducible admissible infinite-dimensional smooth representations of $\GL_2(\Q_p)$ (as recalled in Table~1 of~\cite{LoWe12} for instance) that in this case $\pi_{F,p}$ necessarily is a principal series $\pi(\mu_1,\mu_2)$ associated with some characters $\mu_1,\mu_2$ of~$\Q_p^\times$. Equating the Hecke eigenvalues we find that
\begin{equation}\label{eq:eigenvalue}
p^{(k-1)/2}(\mu_1^*(p)+\mu_2^*(p))=A_p,
\end{equation}
where 
\[
\mu_i^*(p)=\left\{
\begin{array}{ll}
\mu_i(p) & \text{if $\mu_i$ is unramified at~$p$} \\
0 & \text{otherwise}
\end{array}
\right.,\quad\text{for }i=1,2.
\]
Let $\sigma^{\lambda}(\pi_{F,p})$ be the representation of the local Weil group~$W(\Qbar_p/\Q_p)$ attached to~$\pi_{F,p}$ by the local Langlands correspondence. By a theorem of Carayol (\cite[Thm.~(A)]{Car86}), it agrees with (the restriction to the Weil group of) the local representation $\rho_{F,\lambda}|_{\Gal(\Qbar_p/\Q_p)}$. 

Let us denote by $\overline{\mu_1}$ and $\overline{\mu_2}$ the reductions modulo~$\place$ of $\mu_1$ and $\mu_2$ respectively. According to~\S0.5 in~\emph{loc. cit.}, we therefore have the following equality of characters of $\Q_p^{\times}$ with values in~$\Ffbar^{\times}$~:
\[
\left\{\overline{\mu_1}\cyclomod^{(k-1)/2},\overline{\mu_2}\cyclomod^{(k-1)/2}\right\}=\left\{\eps_1,\eps_2\cyclomod^{k-1}\right\}.
\]
The result now follows from~(\ref{eq:eigenvalue}).

\end{proof}

Let us now consider the Eisenstein series $E_k^{\chi_1,\chi_2}$. Since both $F$ and $E_k^{\chi_1,\chi_2}$ are eigenfunctions for the full Hecke algebra at level~$N$, it follows from the previous proposition and the multiplicativity of the Fourier coefficients that
\[
\nu_\place(A_n)= \nu_{\place}\left(a_n\left(E_k^{\chi_1,\chi_2}\right)\right), \quad\text{for all prime-to-\(l\) integers \(n\)}.
\]
Note that by Lemma~\ref{lem:Carlitz} and Eq.~(\ref{eq:ES}), the $q$-expansion of the Eisenstein series $E_k^{\chi_1,\chi_2}$ lies in~$\overline{\Z}_\place[[q]]$. Let us denote by $\overline{E}$ its reduction modulo $w$. 
 Then, both \(f\) and $\overline{E}$ have the same image under the \(\Theta\)-operator whose action on the $q$-expansions is $q\frac{d}{dq}$ (see~\cite[Ch.~II]{Kat77}).
 
 We remark that, since we are assuming that $k \geq 2$ and  $l \nmid N$, the space of modular forms for $\Gamma_1(N)$ over $\Ffbar$ in the sense of Katz and in the sense of Serre are naturally isomorphic (\cite{DiIm93}, Theorem 12.3.7). Then, since \(l>k+1\), we can use \cite[Cor.~3]{Kat77} to assert  that the $\Theta$-operator is injective. Hence, $\overline{E}$ is a cuspidal form over~$\Ffbar$. This implies that \(\place\) divides the constant term of~\(E_k^{\chi_1,\chi_2}\) at each of the cusps. 

In particular, it divides the constant term of the Fourier expansion at~\(\infty\) of~\(E_k^{\chi_1,\chi_2}|_{k}\gamma\) where~\(\gamma=
\begin{pmatrix}
          1 & 0 \\
	  \cond{2} & 1 \\
\end{pmatrix}\in\SL_2(\Z)
\). According to Proposition~\ref{prop:cst_terms}  (applied to~\(M=1\)  in its notation), \(\place\) divides
\[
\left(\frac{\cond{2}}{\cond{0}}\right)^k
\frac{W\left((\chi_1\overline{\chi_2})_0\right)}{W\left(\overline{\chi_2}\right)}
\frac{B_{k,(\overline{\chi_1}\chi_2)_0}}{2k}
\prod_{p\mid N}\left(1-\left(\chi_1\overline{\chi_2}\right)_0(p)p^{-k}\right).
\]
However~\(\cond{0}\), \(\cond{2}\), \(W\left((\chi_1\overline{\chi_2})_0\right)\), $2k$ and~\(W\left(\overline{\chi_2}\right)\) are all coprime to~\(l\). Moreover, $(\overline{\chi_1}\chi_2)_0$ is nothing but the multiplicative lift of~$\eta=\eps_1^{-1}\eps_2$ with respect to~$\place$. Hence, either $B_{k,\eta}=0$, or there exists a prime~$p\mid N$ such that $\eta(p)p^k=1$. This proves the direct  implication in Theorem \ref{main}.

Conversely, assume that either condition of the theorem is satisfied. Then, by definition of the characters~$\chi_1$ and~$\chi_2$ and of the Bernoulli number~$B_{k,\eta}$, the place~$\place$ divides (the numerator of) 
\[
B_{k,(\overline{\chi_1}\chi_2)_0}
\cdot\prod_{p\mid N}\left((\overline{\chi_1}\chi_2)_0(p)p^{k}-1\right).
\]
Then, according to Proposition~\ref{prop:cst_terms} (with~\(M=1\)), the constant term of the Eisenstein series ~\(E_k^{\chi_1,\chi_2}\) vanishes at each of the cusp of the modular curve~$X_1(N)$. Let $f$ be  its reduction modulo~\(\place\), which is an eigenform with coefficients in~\(\Ffbar\). As we argued before, $f$ can be seen both as a Katz or Serre modular form. Then, the $q$-expansion principle allows us to ensure that $f$ is a cuspidal eigenform (\emph{cf.} \cite{DiIm93}, Remark 12.3.5).

On the other hand, for every prime~$q\nmid Nl$, we have
\[
\textrm{trace}\left( \rho_f(\Frob_q)\right)=\nu_\place\left(a_q(E_k^{\chi_1,\chi_2})\right)=\eps_1(q)+\eps_2(q)q^{k-1}=\textrm{trace}\left(\rho(\Frob_q)\right).
\]

Since $\det\rho_f=\eps\cyclomod^{k-1}=\det\rho$, the Chebotarev Density and Brauer-Nesbitt theorems, as explained in \cite[Lem.~3.2]{DeSe74}, imply that  $\rho_f\simeq\rho$. Then, $f$ is the desired eigenform. This finishes the proof of Theorem~\ref{main}.

\subsection{Proof of Theorem \ref{main2}}\label{conducteur}

In the case $(N,k)=(1,2)$ (where we necessarily have $\rho\simeq\trivial\oplus\cyclomod$ and hence~\(\rho\) is not strongly modular), the result is due to Mazur (\cite[Prop.~5.12]{Maz77}). 

We therefore assume throughout that $(N,k)\not=(1,2)$ and start by proving the direct implication.

Using the assumption that the representation~$\rho$ arises from a modular form~$f$ of type~$(NM,k,\varepsilon)$ over~$\Ffbar$, we show as before that there exists $F=\sum_{n\geq1}A_nq^n$, a weight-$k$ normalized cuspidal eigenform of level~$NM$ and Nebentypus character~$\chi$ with the following property. Let $\lambda$ be  the  prime ideal of the coefficient field of~$F$ induced by~$\place$. The semisimplification of the reduction modulo~$\lambda$ of the $\lambda$-adic representation attached to~$F$  is isomorphic  to~$\rho$. Let $F_0$ denote the newform associated with~$F$. The $\lambda$-adic representations attached to~$F_0$ and~$F$ are isomorphic. In particular, after reduction modulo~$\lambda$ and semisimplification, they both give rise to~$\rho$. Since $\rho$ has conductor~$N$, it follows from~\cite[Thm.~(A)]{Car86} and the considerations in  \cite[{\bf 1.-2.}]{Car89},  that the level of~$F_0$ is divisible by~$N$. Moreover, we have assumed that $\rho$ is not strongly modular, and thus the level of~$F_0$ is strictly greater than~$N$. Since it is a divisor of~$NM$, it has to be equal to~$NM$ and $F=F_0$ necessarily is a newform. Therefore, considering its associated automorphic representation, we prove the following result using the same arguments as in Proposition~\ref{prop:congruences}.
\begin{prop}\label{prop:away_from_M}
In this notation, we have 
\[
\nu_\place(A_p)=\eps_1(p)+\eps_2(p)p^{k-1},\quad\text{for every prime~$p\not=l,M$.}
\]
\end{prop}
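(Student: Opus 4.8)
The plan is to mimic the proof of Proposition~\ref{prop:congruences} exactly, replacing the role of $N$ there by $NM$ and being careful that $M$ is now excluded alongside $l$. Since $F=F_0$ is a newform of level $NM$ and Nebentypus $\chi$ (a character modulo $N$, hence unramified at $M$), the only new feature is that we must handle primes $p\mid NM$ with $p\neq l,M$, i.e. primes dividing $N$. The prime $M$ itself is excluded, so there is nothing to prove for it. Thus the statement is really about the primes dividing $N$, and the argument is identical to the one already given.

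Concretely, first I would note that for primes $p\nmid NMl$ the equality $\nu_\place(A_p)=\eps_1(p)+\eps_2(p)p^{k-1}$ follows from the Chebotarev/Brauer--Nesbitt comparison of $\rho_{F,\lambda}^{\mathrm{ss}}$ with $\rho=\nu_1\oplus\nu_2=\eps_1\oplus\eps_2\cyclomod^{k-1}$, exactly as before. Then, for a prime $p\mid N$ (so $p\neq l$ since $N$ is coprime to $l$, and $p\neq M$ since $\gcd(NM,\ldots)$ — actually $p\mid N$ and $M\nmid N$ force $p\neq M$), I would split into the same three cases according to $\ord{p}(NM)=\ord{p}(N)$ versus $\ord{p}(\cond{})$: the case $\ord{p}(N)=1$, $\ord{p}(\cond{})=0$ is impossible by the same conductor count; the case $\ord{p}(\cond{})<\ord{p}(N)$ gives $A_p=0$ by \cite[Thm.~4.6.17]{Miy06} and both $\eps_1,\eps_2$ ramified at $p$, so both sides vanish; and the case $\ord{p}(\cond{})=\ord{p}(N)$ is handled via the adelic Hecke operator $\widetilde{U_p}$, Lemma~\ref{lem:eigenvalue}, the principal-series classification $\pi_{F,p}=\pi(\mu_1,\mu_2)$, Carayol's theorem \cite[Thm.~(A)]{Car86} identifying $\sigma^\lambda(\pi_{F,p})$ with $\rho_{F,\lambda}|_{\Gal(\Qbar_p/\Q_p)}$, and the resulting identification $\{\overline{\mu_1}\cyclomod^{(k-1)/2},\overline{\mu_2}\cyclomod^{(k-1)/2}\}=\{\eps_1,\eps_2\cyclomod^{k-1}\}$ of characters of $\Q_p^\times$, which combined with the Hecke-eigenvalue relation $p^{(k-1)/2}(\mu_1^*(p)+\mu_2^*(p))=A_p$ yields the claim.

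I do not expect any genuine obstacle here: every ingredient is already in place from Proposition~\ref{prop:congruences}, and the presence of the extra prime $M$ in the level is harmless because $M$ is simply removed from the list of primes under consideration and because $\chi$ remains a character modulo $N$. The only point requiring a word of care is ensuring that a prime $p\mid N$ is distinct from both $l$ and $M$: $p\neq l$ since $l\nmid N$, and $p\neq M$ since $M\nmid N$ while $M$ is prime, so $p\mid N$ and $p=M$ would force $M\mid N$. Hence the proof reduces verbatim to that of Proposition~\ref{prop:congruences} with $N$ replaced by $NM$ throughout, and I would simply say so rather than reproduce the three-case analysis in full.
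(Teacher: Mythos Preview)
Your proposal is correct and matches the paper's own approach exactly: the paper simply states that the result follows ``using the same arguments as in Proposition~\ref{prop:congruences}'' without reproducing the case analysis, and your write-up spells out precisely why those arguments transfer verbatim (with $N$ replaced by $NM$, the prime $M$ excluded, and the observation that any $p\mid N$ is automatically distinct from both $l$ and $M$).
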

We now turn our attention to the local situation at~$M$ and prove the following statement.
\begin{prop}\label{prop:at_M}
We have 
\begin{enumerate}
\item\label{item:cond1} either $\eta(M)M^k=1$;
\item\label{item:cond2} or, $\eta(M)M^{k-2}=1$ and $\nu_\place(A_M)=\eps_1(M)$.
\end{enumerate}
\end{prop}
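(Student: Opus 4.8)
The plan is to analyze the local component $\pi_{F,M}$ of the automorphic representation attached to the newform $F$ of level $NM$, exactly as in the proof of Proposition~\ref{prop:congruences}. Since $F$ is $M$-new and $M \nmid N$, the local representation $\pi_{F,M}$ is an irreducible admissible infinite-dimensional smooth representation of $\GL_2(\Q_M)$ whose conductor is exactly $M$ (as $\ord{M}(NM)=1$ and $\ord{M}(\cond{})=0$). Consulting the classification (as recalled in Table~1 of~\cite{LoWe12}), a representation of conductor~$M$ with $M$ exactly dividing the level is either an unramified twist of the Steinberg representation, or a principal series $\pi(\mu_1,\mu_2)$ with exactly one of $\mu_1,\mu_2$ unramified (of conductor~$M$) and the other unramified. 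First I would treat these two cases separately.

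In the principal series case, write $\pi_{F,M}=\pi(\mu_1,\mu_2)$ with, say, $\mu_1$ unramified and $\mu_2$ ramified of conductor~$M$. The new-vector eigenvalue of $\widetilde{U_M}$ is then $\mu_1(M)$ (only the unramified character contributes), so by Lemma~\ref{lem:eigenvalue} we get $A_M = M^{(k-1)/2}\mu_1(M)$, and in particular $A_M \neq 0$. Now I invoke Carayol's theorem (\cite[Thm.~(A)]{Car86}) and the normalization recalled in its~\S0.5: the semisimplified reduction mod~$\place$ of $\rho_{F,\lambda}|_{\Gal(\Qbar_M/\Q_M)}$ equals $\overline{\mu_1}\cyclomod^{(k-1)/2} \oplus \overline{\mu_2}\cyclomod^{(k-1)/2}$, and this must agree with $\rho|_{\Gal(\Qbar_M/\Q_M)} = \eps_1 \oplus \eps_2\cyclomod^{k-1}$ restricted to the decomposition group at $M$. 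Since $M \nmid Nl$, both $\eps_1$ and $\eps_2$ are unramified at~$M$, while $\overline{\mu_2}\cyclomod^{(k-1)/2}$ is ramified at~$M$; this is a contradiction unless the ramified character on the left actually becomes unramified after reduction, which forces a congruence. Matching the unramified character $\overline{\mu_1}\cyclomod^{(k-1)/2}$ against whichever of $\eps_1,\eps_2\cyclomod^{k-1}$ it equals, and using $A_M=M^{(k-1)/2}\mu_1(M)$ together with $\nu_\place(A_M)=\overline{\mu_1}(M)M^{(k-1)/2}$, I extract: either $\nu_\place(A_M)=\eps_1(M)$ and evaluating the determinant/Frobenius relation gives $\eta(M)M^{k-2}=1$ (case~(\ref{item:cond2})), or the parallel identification yields $\eta(M)M^k=1$ (case~(\ref{item:cond1})). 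One must keep careful track of the relation $\det\rho=\eps\cyclomod^{k-1}$ restricted to $\Frob_M$, i.e. $\eps_1(M)\eps_2(M)M^{k-1}=\eps(M)M^{k-1}$, when converting between $\{\overline{\mu_1},\overline{\mu_2}\}$ and $\{\eps_1,\eps_2\cyclomod^{k-1}\}$.

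In the (twisted) Steinberg case, $\pi_{F,M}=\mathrm{St}\otimes\mu$ with $\mu$ unramified, the $\widetilde{U_M}$-eigenvalue on new-vectors is $\mu(M)$ with $\mu(M)^2 = M^{-1}$ (up to the normalization), so $A_M = M^{(k-1)/2}\mu(M)$ and $A_M^2 = M^{k-2}$. Carayol's theorem now identifies the reduction of $\rho_{F,\lambda}|_{\Gal(\Qbar_M/\Q_M)}$ with the reduction of the Weil-Deligne representation of $\mathrm{St}\otimes\mu$: up to semisimplification this is $\overline{\mu}\cyclomod^{(k-1)/2}(\cyclomod^{1/2}\oplus\cyclomod^{-1/2})$, i.e. the two characters $\overline{\mu}\cyclomod^{k/2}$ and $\overline{\mu}\cyclomod^{(k-2)/2}$ (again up to the normalization conventions), both unramified. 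Equating the unordered pair of these with the unordered pair $\{\eps_1,\eps_2\cyclomod^{k-1}\}$ evaluated on $\Frob_M$, and eliminating $\overline{\mu}(M)$ using $A_M^2=M^{k-2}$, produces precisely the relation $\eta(M)M^k=1$, which is conclusion~(\ref{item:cond1}). So the Steinberg case only contributes option~(\ref{item:cond1}).

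The main obstacle will be bookkeeping the normalizations: the half-integral powers of the cyclotomic character in Carayol's \S0.5 formula, the factor $1/\sqrt M$ built into $\widetilde{U_M}$ in~\eqref{eq:Hecke_op}, the unitary normalization of the local Langlands correspondence versus the arithmetic one, and the distinction between $\cyclomod$ and its mod~$l$ avatar. Since $l>k+1$ and $l$ is odd, $(k-1)/2$ and the various half-powers make sense mod~$l-1$ after choosing the correct square root, and no character is lost; but I must verify that the two seemingly different-looking conclusions genuinely correspond to the two ways of pairing the character sets (the ramified-becomes-unramified congruence can occur for either of the two characters), and that in the Steinberg case the pairing is forced. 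Once the pair $\{\eps_1|_{\Frob_M},\,\eps_2(M)M^{k-1}\}$ is matched against the pair coming from $\pi_{F,M}$, the two cases~(\ref{item:cond1}) and~(\ref{item:cond2}) of the proposition drop out by a direct computation, and $\nu_\place(A_M)$ is read off as the image of the unramified character's value on $M$ times $M^{(k-1)/2}$.
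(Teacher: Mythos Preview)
Your case split is where the argument breaks down. Since $M\nmid N$ and $\chi$ has conductor dividing~$N$, the central character of $\pi_{F,M}$ is \emph{unramified} at~$M$. A principal series $\pi(\mu_1,\mu_2)$ with conductor exactly~$M$ would require one of $\mu_1,\mu_2$ to be ramified of conductor~$M$ and the other unramified; but then $\mu_1\mu_2$ is ramified, contradicting the unramifiedness of the central character. Hence the principal series branch you analyze is vacuous, and the entire content of the proposition must come from the Steinberg case. This is exactly what the paper does: it invokes \cite[Prop.~2.8]{LoWe12} (after noting $A_M\neq 0$ via \cite[Thm.~4.6.17(2)]{Miy06}) to conclude directly that $\pi_{F,M}$ is a twist of Steinberg.

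Consequently your claim that ``the Steinberg case only contributes option~(\ref{item:cond1})'' is wrong and leaves a genuine gap. In the Steinberg case the reduced local semisimplification is $\{\mu\cyclomod^{k/2},\ \mu\cyclomod^{k/2-1}\}$ with $\mu$ unramified and $\mu(M)=\nu_\place(A_M/M^{k/2-1})$; matching this unordered pair against $\{\eps_1,\ \eps_2\cyclomod^{k-1}\}$ admits \emph{two} pairings, not one. If $\eps_1=\mu\cyclomod^{k/2}$ one squares and uses $\mu(M)^2=\nu_\place(\chi(M))=(\eps_1\eps_2)(M)$ (from $A_M^2=\chi(M)M^{k-2}$) to get $\eta(M)M^k=1$. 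If instead $\eps_1=\mu\cyclomod^{k/2-1}$ the same computation gives $\eta(M)M^{k-2}=1$, and moreover $\eps_1(M)=\mu(M)M^{k/2-1}=\nu_\place(A_M)$. These are precisely the two alternatives of the proposition. Your relation ``$\mu(M)^2=M^{-1}$'' is only the absolute-value statement and loses the factor $\chi(M)$ needed to recover $\eta$; and your elimination of $\overline{\mu}(M)$ silently fixed one of the two pairings.
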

\begin{proof}
According to~\cite[Thm.~4.6.17(2)]{Miy06}, we have $A_M\not=0$. In parti\-cular, the form~$F$ is $M$-primitive in the sense of~\cite[Def. p.~236]{AtLi78} (see the remark right after the definition). Therefore, according to Proposition~2.8 of~\cite{LoWe12}, the local component at~$M$ of the automorphic represention of~$F$ corresponds to a Steinberg representation. Moreover, we have the following equality between sets of characters of a decomposition group at~$M$ in~$\Gal(\Qbar/\Q)$ with values in~$\Ffbar^\times$~:
\[
\left\{\eps_1,\eps_2\cyclomod^{k-1}\right\}=\left\{\mu\cyclomod^{k/2},\mu\cyclomod^{k/2-1}\right\},
\]
where $\mu$ is the unramified character that sends a Frobenius element at~$M$ to $\mu(M)=\nu_\place\left(A_M/M^{k/2-1}\right)$. We therefore have two cases to consider~:
\begin{itemize}
\item Assume that, locally at~$M$, we have $\eps_1=\mu\cyclomod^{k/2}$. Then, in particular, we have \(\eps_1(M)^2=\mu(M)^2M^k\). On the other hand, according to~\cite[Thm.~4.6.17]{Miy06}, we have $\mu(M)^2=(\eps_1\eps_2)(M)$. Therefore, we get that $\eta(M)M^k=1$. (Note that the other equality, namely $\eps_2\cyclomod^{k-1}=\mu\cyclomod^{k/2-1}$, does not provide any additional information.)
\item Assume instead that, locally at~$M$, we have \(\eps_1=\mu\cyclomod^{k/2-1}\). Then, on the one hand, we have that $\eps_1(M)=\mu(M)M^{k/2-1}$ and hence $\nu_\place(A_M)=\eps_1(M)$. On the other hand, we have (using \emph{loc. cit.}) $M^{2k-2}\eps_2(M)^2=\mu(M)^2M^k$. Therefore we get that $\eta(M)M^{k-2}=1$. Hence the result follows. (Once again, the other equality, namely \(\eps_2\cyclomod^{k-1}=\mu\cyclomod^{k/2}\), does not give any other information.)
\end{itemize}
\end{proof}
In order to finish the proof of Theorem~\ref{main2}, it therefore remains to show that, under the assumption that~$\rho$ is not strongly modular, condition \eqref{item:cond2} in Proposition \ref{prop:at_M} implies condition \eqref{item:cond1}. For that purpose, let us assume that condition~(\ref{item:cond2}) is satisfied and consider the following Eisenstein series~:
\[
F_1=E_k^{\chi_1,\chi_2}-\chi_2(M)M^{k-1}\alpha_ME_k^{\chi_1,\chi_2}.
\]
It is a well-known fact that $F_1$ is an eigenform for the full Hecke algebra at level~$NM$ with eigenvalues
\[
a_p(F_1)=\chi_1(p)+\chi_2(p)p^{k-1},\quad\text{for primes }p\not=M
\]
and $a_M(F_1)=\chi_1(M)$. In particular, as a consequence of Proposition~\ref{prop:away_from_M} and our assumption, we have
\begin{equation}\label{eq:reduction_of_F_1}
\nu_\place\left(a_n(F_1)\right)=\nu_\place(A_n),\quad\text{for every integer~$n$ coprime to~$l$},
\end{equation}
where $\left\{a_n(F_1)\right\}_{n\geq1}$ denote the coefficients of the Fourier expansion of~$F_1$ at~$\infty$.  By definition of~$F_1$, Lemma~\ref{lem:Carlitz} and Eq.~(\ref{eq:ES}), this $q$-expansion lies in~$\overline{\Z}_\place[[q]]$. Let us thus denote by $\overline{F_1}$ the reduction of~$F_1$ modulo~\(\place\). According to~(\ref{eq:reduction_of_F_1}), $\overline{F_1}$ and the reduction of~$F$ modulo~$\place$ have the same image under the $\Theta$-operator. Since $l>k+1$, the injectivity of $\Theta$ (\cite[Cor.~3]{Kat77}) implies that $\overline{F_1}$ is cuspidal. Therefore, we have that $\place$ divides the numerator of the constant of the term of the Fourier expansion of~$F_1$ at each cusp of the modular curve at level~$NM$. According to Corollary~\ref{cor:cst_terms}, such a constant term at the cusp~$1/(M\cond{2})$ is given (up to  roots of unity) by 
\[
\Upsilon_k^{\chi_1,\chi_2}(\gamma,1)\left(1-\overline{\chi_1}(M)\chi_2(M)M^{k-1}\right),
\]
where $\gamma\in\SL_2(\Z)$ is such that $\gamma\cdot\infty=1/(M\cond{2})$. On the other hand, for such a $\gamma$, thanks to Theorem~\ref{main} and Proposition~\ref{prop:cst_terms}, the assumption that $\rho$ is not strongly modular guarantees that $\Upsilon_k^{\chi_1,\chi_2}(\gamma,1)$ is (non-zero and) not divisible by~$\place$. Therefore, it follows  that $\eta(M)M^{k-1}=1$ and hence $M\equiv1\pmod{l}$ (as we have assumed $\eta(M)M^{k-2}=1$). This implies the desired equality $\eta(M)M^k=1$ and concludes the proof of the direct implication.

In the other direction, assuming that $\eta(M)M^k=1$, we now consider the Eisenstein series defined by
\[
F_2=E_k^{\chi_1,\chi_2}-\chi_1(M)\alpha_ME_k^{\chi_1,\chi_2}.
\]
For any $\gamma\in\SL_2(\Z)$, let us denote by $a_0\left(F_2|_k \gamma\right)$ the constant term of the Fourier expansion at~$\infty$ of~$F_2|_k \gamma$. According to Corollary~\ref{cor:cst_terms}, using its notation, we have that
\[
a_0\left(F_2|_k \gamma\right)=\Upsilon_k^{\chi_1,\chi_2}(\gamma,1)
\left(1-\left(\frac{r}{M}\right)^k(\chi_1\overline{\chi_2})(M/r)\right),
\]
where $r=1$ or~$M$. In both cases, using the assumption $\eta(M)M^k=1$, we have that $\nu_\place\left(a_0\left(F_2|_k \gamma\right)\right)=0$. We denote by $f$ the reduction of~$F_2$ modulo~$\place$. It is a well-defined cuspidal form of type~$(NM,k,\eps)$ over~$\Ffbar$ which is an eigenform for the full Hecke algebra at level~$NM$ with eigenvalue for the Hecke operator at~$p$ given by
\[
\eps_1(p)+\eps_2(p)p^{k-1},\quad \text{for all primes }p\not=M.
\]
Then, the Chebotarev Density and Brauer-Nesbitt theorems, as explained in \cite[Lem.~3.2]{DeSe74}, imply  that $\rho$ arises from a form of type~$(NM,k,\eps)$ as desired.

%

\end{document}